\def\R{\mathbb{R}}
\def\H{\mathcal{H}}
\def\eref#1{{\rm (\ref{#1})}}
\def\fref#1{{Fig.~\ref{#1}}}
\def\sref#1{{Sect.~\ref{#1}}}
\def\tref#1{{Tab.~\ref{#1}}}
\def\lref#1{{Lem.~\ref{#1}}}
\def\qed{~\relax\ifmmode\hskip2em \Box
 \else\unskip\nobreak\hskip1em \hfill$\Box$
 \fi \newline}
\def\dt#1{{\dot{#1}}}
\def\dtt#1{{\ddot{#1}}}
\def\icz{u_0}
\def\icf{u_1}
\def\rbfHm{\H^\rbfm}
\def\solHm{\H^\solm}
\def\rbfm{{m}}
\def\solm{{m}}
\def\Phim{\Phi_{\rbfm}}
\def\trialsp{{\mathcal{V}_{X,\Sigma,\Phim}}}
\def\interp{{\mathcal{I}_{X,\Sigma,\Phim}}}
\def\pde{\eref{eq:NLW}--\eref{eq:AssumOfu} }
\def\semipde{\eref{eq:NLWGalerkinDiscrete2}--\eref{eq:NLWGalerkinDiscrete2IC}}
\def\V{{\mathcal{U}^1}(\Omega)}
\def\Vm{{\mathcal{U}^\solm}(\Omega)}
\def\Po{{\mathcal{P}}}
\def\RN{{\mathcal{R}}}
\def\uoft{{u(t)}}
\def\IP#1#2{{\big\langle{#1},\,{#2}\big\rangle}}
\def\IPOmega#1#2{{{\IP{#1}{#2}}_{L^2(\Omega)}}}
\def\balpha{{\boldsymbol{\alpha}}}
\def\bbeta{{\boldsymbol{\beta}}}
\def\bgamma{{\boldsymbol{\gamma}}}
\def\bvartheta{{\boldsymbol{\vartheta}}}
\def\mtxA{{G_{\Omega,\Phim}}}
\def\mtxB{{G_{\Omega,\nabla\Phim}}}
\title{A kernel-based meshless conservative Galerkin method for solving Hamiltonian wave equations\thanks{
    This work is supported by National Natural Sicence Foundation of China (12101310, 10631015), Natural Science Foundation of Jiangsu Province (BK20210315), 2021 Jiangsu Shuangchuang Talent Program (JSSCBS 20210222) and a Hong Kong Research Grant Council GRF Grant.
}}
\author{
    Zhengjie Sun\thanks{School of Mathematics and Statistics,  Nanjing University of Science and Technology, Nanjing, China (\email{zhengjiesun@njust.edu.cn}).}
    \and
    Leevan Ling\thanks{Department of Mathematics, Hong Kong Baptist University, Kowloon Tong, Hong Kong (\email{lling@hkbu.edu.hk}).}
}
\begin{document}

\maketitle
\begin{abstract}
We propose a meshless conservative Galerkin method for 
solving Hamiltonian wave equations. We first discretize the equation in space using radial basis functions in a Galerkin-type formulation.
Differ from the traditional RBF Galerkin method that directly uses nonlinear functions in its weak form, our method employs  appropriate projection operators in the construction of the Galerkin equation, which will be shown to conserve global energies.
Moreover, we provide a complete error analysis to the proposed discretization.
We further derive the fully discretized solution by a second order
average vector field  scheme.
We prove that the fully discretized solution preserved the discretized energy exactly. Finally, we provide some numerical examples to demonstrate the accuracy and the energy conservation.
\end{abstract}
\begin{keywords}
Energy conservation; meshless Galerkin method; radial basis functions; semilinear wave equations; evolutionary PDEs; average vector field.
\end{keywords}
\begin{AMS}
    65D05, 65M12, 65M60, 65N40.
\end{AMS}

\section{Introduction}
\label{Sec:Introduction}
We consider the following semilinear Hamiltonian  wave equation,
which is a classical example of Hamiltionian PDEs that have a symplectic geometric structure, in the form of
\begin{equation}\label{eq:NLW}
\dtt{u}(x,t)-\Delta u(x,t)+F'\big(u(x,t)\big)=0, \qquad (x,t)\in \R^d\times(0,T]
\end{equation}
for some smooth real-valued function $F':\mathbb{R}\rightarrow\mathbb{R}$, subject to the initial conditions
\begin{equation}\label{eq:NLWInit}
    u(x,0)=\icz(x) \mbox{\quad and \quad} \dt{u}(x,0)=\icf(x), \qquad x\in\R^d.
\end{equation}
Here, the conventional notation $F'(u)$ for nonlinearity is used as in the literature \cite{Reich,Zhu0,WuZhang}. We focus on bounded and compactly supported waves; that is, the support of the solution $u$ for all time, i.e.,
\begin{equation}\label{eq:AssumOfu}
    \Sigma:=\text{supp}(u)=\big\{x\in\R^d\,\big|\,u(x,t)\neq 0 \mbox{ for some } t\in[0,T] \big\},
\end{equation}
is a compact set. It is assumed that the supports of both $\icz$ and $\icf$ were also contained in $\Sigma$.
Equations \pde describe vibration and wave propagation phenomenon. They appear frequently in symplectic geometry \cite{hairer2006geometric,feng2010symplectic}  as well as many areas of physics, including quantum mechanics and superconductors.  Different choice of $F'(u)$ yields mathematical model for different physics. Besides of the trivial linear case  $F'(u)=Cu$ that models wave propagation, our study also covers the Sine-Gordon equation with $F'(u)=\sin u$,
the Klein-Gordon equation with $F'(u)=|u|^p u$ for some $p\geq0$,
the exponential wave equation with $F'(u)=C\exp(-u)$,
and many more.

Under the assumption in \eref{eq:AssumOfu}, there exists some sufficiently large (to be specified in \sref{Sec:Radial basis functions and approximation spaces}), open, connected, bounded domain $\Omega \supset \supset \Sigma$ with smooth boundary, on which the Hamiltonian equation is subject to zero Dirichlet boundary conditions.
Existence and uniqueness of Hamiltonian equation \pde were studied extensively in the literature. For example, in the Klein-Gordon equation, Lions \cite{lions1969quelques} proved that \pde posed in bounded domains has a unique solution, provided that the initial data satisfies $\icz(x)\in \H_0^1(\Omega)\cap L^{p+2}(\Omega)$ and $\icf(x)\in L^2(\Omega)$.
Later, \cite{evans2010partial,gibibre1985global} proved that smooth solution exists when $p<4$. More recently, it was proven theoretically and numerically in \cite{medeiros2012wave,rincon2016numerical} that for any initial data $\icz(x)\in \H_0^1(\Omega)$ and $\icf(x)\in L^2(\Omega)$, the solution exists for some time interval $[0,T_0)$, i.e. $u\in L^{\infty}(0,T_0;\H_0^1(\Omega))$. The existence of smooth solution for exponential wave equation was discussed in \cite{wang2015energy}.

Define an appropriate solution space as
\begin{equation}\label{eq:solsp}
    \V := \big\{ \chi \in \H^1(\Omega) \,\big|\, \text{supp}(\chi)\subseteq \Sigma \subset\subset \Omega \big\} \subset \H^1_0(\Omega).
\end{equation}
Then,
the Galerkin formulation of \eqref{eq:NLW} is derived by multiplying a test function $\chi\in \V$ on both sides followed by an integration over $\Omega$. Then, Green’s first identity leads to the problem of seeking a function $\uoft:=u(\cdot,\,t)\in \V$ satisfying the initial conditions \eqref{eq:NLWInit} and
\begin{equation}\label{eq:NLWGalerkin}
\IP{\dtt{u}(t)}{\chi}+ \IP{\nabla u(t)}{\nabla \chi}+
\IP{F'(u(t))}{\chi}=0, ~~\forall \chi\in \V,
\end{equation}
where $\IP{\cdot}{\cdot}=\IPOmega{\cdot}{\cdot}$ denotes the $L^2$ inner product in $\Omega$.
Moreover, the nonlinear wave equation \eqref{eq:NLW} subject to  zero Dirichlet boundary conditions  admits an energy conservation law with the first-order time-derivative of the Hamiltonian
\begin{equation}\label{eq:s3energyconservation}
0= \dot{E}(t;u):= \frac{d}{dt}  
\int_{\Omega}\left[\frac{1}{2}(\dt{u}(t))^2+\frac{1}{2}|\nabla u(t)|^2+F(u(t))\right]dx.
\end{equation}
It is higher desirable to have numerical schemes that inherit the conservation property.
In the literature, various approaches had been adopted for spatial discretization to obtain energy-conserving numerical schemes; commonly used ones include the finite difference method \cite{livu1995finite,matsuo2001dissipative}, the finite element method \cite{eidnes2018adaptive}, the Gauss-Legendre collocation method \cite{Reich}, the Fourier pseudo-spectral method \cite{chen2001multi,cai2013conservative,cai2016numerical,gong2017conservative}, the wavelet collocation method \cite{Zhu0}, the discontinuous Galerkin method \cite{liu2016hamiltonian} and so on.
Some of above methods, e.g., the Gauss-Legendre method and the Fourier pseudo-spectral method, can only preserve the energy on uniform grids. Nonuniform grids or scattered data sites are of particular importance for solving multi-dimensional problems with irregular domain.
Allowing the use of scattered data provides a possibility to develop adaptive energy-conserving schemes. For example, Yaguchi, Matsuo and Sugihara \cite{yaguchi2010extension,yaguchi2012discrete} proposed two different discrete variational derivative methods on non-uniform grids for certain classes of PDEs based on the mapping method (i.e. changing coordinates) and discrete differential forms. In addition, Eidnes, Owren and Ringholm \cite{eidnes2018adaptive} used weighted finite difference methods and partition of unity methods to construct adaptive energy preserving numerical schemes on non-uniform grids.

Meshless methods are popular in the past few decades for solving PDEs   \cite{belytschko1996meshless},  especially radial basis function and kernel-based methods   \cite{fasshauer2007meshfree,Wendland}  since they do not require generating a mesh and can excellently approximate multivariate function sampled on scattered nodes. Recently, the meshless Galerkin method have attracted some interest since it combines the advantages of meshless method and the finite element method.  Wendland firstly established the theory of RBFs with the field of Galerkin methods to solve PDEs in \cite{wendland1999meshless}. Afterwards, the meshless Galerkin method was applied to solve other types of equations, such as Schr\"{o}dinger equation \cite{kormann2013galerkin}, elliptic and parabolic PDEs on spheres \cite{legia04galerkin,narcowich2017novel,kunemund2019high}, and non-local diffusion equation \cite{lehoucq2016meshless}.
For conservation schemes on scatted nodes, Wu and Zhang \cite{WuZhang} proposed a meshless conservative method based on radial basis function (RBF) approximation for solving linear mutivariate Hamiltonian equation. The nonlinear Hamiltonian equation was recently studied by Sun and Wu in \cite{SunWu}; both papers dealt with conservation of sympleciticy rather than energy conservation.
As of the day of writing, there isn't any meshless Galerkin energy conservative schemes available.
With this premise,
the main contribution of this work is to provide a new kernel-based \emph{energy conservative} method that solves nonlinear wave equations \pde along with  rigorous convergence proofs.

Our analysis follows the standard approach in the method of lines, in which \pde were converted into a  Galerkin  formulation in some kernel-based approximation space in \sref{Sec:Radial basis functions and approximation spaces}. For the sake of energy conservation,
we define two projection operators and utilize them in the Galerkin formulation. In \sref{Sec:3} and \ref{sec:Energy conservation}, we show that the semi-discretized Galerkin solution is convergent and conserves energy in a discrete sense, respectively. Fully discretized solution is obtained by solving the ordinary differential equations (ODEs) from the Galerkin formulation by a energy-conserving method. In \sref{sec:numerical exmp}, a linear wave and Sine-Gordon equations in 1D were used to validate our theoretical convergence rate and energy conservation property. We conclude the paper with simulations of a 2D nonlinear wave equation.

%
%
%

\section{Kernel,  trial space, and projections}\label{Sec:Radial basis functions and approximation spaces}

This section contains some mathematical preparations for discretizing  \eref{eq:NLWGalerkin} spatially.
Firstly, we suppose that the domain, initial data, and nonlinearity are sufficiently smooth so that Hamiltonian equation \pde admits a unique solution $u^*$ with sufficient regularity to allow higher order methods to work. In particular,  we assume
\begin{equation}\label{eqUm}
u^*(\cdot,\,t)\in \V\cap \solHm(\Omega)=:\Vm
\end{equation}
for some integer $\solm>d/2$ and all $t\in[0,T]$.
Because of the condition $\solm>d/2$ and Sobolev embedding theorem, we are dealing with  continuous functions.

For the same order of smoothness $\solm$, let
$\Phim:\R^d \rightarrow\R$  be a \emph{symmetric positive definite (SPD)} reproducing kernel of $\H^\rbfm(\R^d)$ that  satisfies
\begin{equation}\label{eq:kernelproperty}
c_1(1+\|\omega\|^2)^{-\rbfm}\leq \widehat{\Phim}(\omega) \leq c_2(1+\|\omega\|^2)^{-\rbfm}
\mbox{ \quad for all $\omega\in\R^d$},
\end{equation}
for some  constants $0<c_1\leq c_2$.
Suppose that the bounded domain $\Sigma$ has $C^\rbfm$-smooth boundary and satisfies an interior cone condition, then,  the reproducing kernel Hilbert space of $\Phim$ on $\Sigma$, a.k.a. the native space \cite{Wu+Schaback-Locaerroestiradi:93},
is also norm-equivalent \cite{Buhmann,Wendland} to $\rbfHm(\Sigma)$.
The standard Whittle-Mat\'{e}rn-Sobolev kernel is a commonly used Hilbert spaces reproducing kernel that satisfies \eqref{eq:kernelproperty} exactly with $c_1=1=c_2$.
Because of the compactly supportedness in \eref{eq:AssumOfu}, we are motivated to use compactly supported kernels. Later in this paper, we use the family of piecewise polynomial Wendland functions\cite{Wendland-Soboerroestiinte:97} with some the shape parameter $\epsilon>0$, $d\leq s$, and $k=0,\ldots,3$ to define $C^{2k}$-kernels $\Phim(\cdot)=\phi_{s,k}(\epsilon \|\cdot\|) :\R^s\to\R$ of smoothness order
$$m=s/2+k+1/2 - (s-d)/2 = d/2+k+1/2.$$
For cases of  $d<s$, readers can refer to the theories of restricted kernels in \cite{Fuselier+Wright-ScatDataInteEmbe:12}.
For example, we will use $s=1,3$ and $k=1,2$ in our numerical computations in  \sref{sec:numerical exmp}, i.e.,
\begin{equation}\label{phis1}
  \phi_{s,1}(r)=(1- r)_{+}^{\ell+1}\big((\ell+1) r+1\big),
\end{equation}
and
\begin{equation}\label{phis2}
  \phi_{s,2}(r)=(1- r)_{+}^{\ell+2}\big((\ell^2+4\ell+3) r^2+(3\ell+6) r+3\big),
\end{equation}
where $\ell:=\lfloor s/2\rfloor +k +1$ and $(x)_{+}:=\max\{0,x\}$.


Using some set  $X = \{x_1,\ldots,x_N\}\subset\Sigma$ of trial centers, we construct finite dimensional trial spaces associated with a compactly supported SPD kernel $\Phim$
by
\begin{equation}\label{eq:RBFsubspace}
\trialsp:=\text{span}\big\{\Phim(x-x_1),\ldots,\Phim(x-x_N)\big\} \subset\rbfHm(\Sigma).
\end{equation}
For any trial function $v \in \trialsp$, we must have
\begin{equation}\label{eq:Omega-def}
 \text{supp}(v) \subseteq \Sigma_{\text{ext}}:= \Big\{ x\in\R^d\,\big|\, \text{dist}( x, \Sigma) < \frac12\text{diam}\big(\text{supp}(\Phim)\big) \Big\} \subseteq \Omega.
\end{equation}
Thus, we can specify a kernel-dependent domain $\Omega$ with sufficient smoothness for the application of RBF interpolation theories.
From here on when no confusion arises, we allow trial functions $v\in \trialsp$, which are already defined in $\R^d$, to be evaluated outside $\Sigma$, i.e., by an natural (or isometric) extension operator \cite[Thm. 10.46]{Wendland} of reproducing kernel Hilbert space. Then, the (time-independent) solution space $\Vm$ is contained in the (extended) native space $\rbfHm(\Sigma)$ of $\Phim$.

For any time-independent $f\in \Vm$, it is also in $\rbfHm(\Sigma)$. We let the unique interpolant of $f$ on $X\subset\Sigma$ from the approximation space $\trialsp$ be
\begin{equation}\label{eq:Interpolant}
\interp f=\sum_{j=1}^N{\lambda}_j\Phim(x-x_j) \in \trialsp.
\end{equation}
The approximation power of \eref{eq:Interpolant} in domain $\Sigma$ were studied extensively. We now restate the results in our contexts, i.e., convergence in $\Omega$ of trial functions with centers in $\Sigma\subset\Omega$.

\begin{lemma}\label{lem:InterpProp}
For any compact set $\Sigma\subset\R^d$, let $\Omega\supset\Sigma_{\text{ext}}$ as in \eref{eq:Omega-def} be bounded and satisfies an interior cone condition.
For some smoothness order $\rbfm>d/2$,
let $\Phim$ be a compactly supported symmetric positive definite kernel satisfying \eref{eq:kernelproperty}.
Denote by $\interp f\in\trialsp$ as in (\ref{eq:Interpolant}) the (natually extended) interpolant on $X\subset\Sigma$ to a function $f\in\Vm$. Then, for all sufficiently dense $X$ with fill distance
\[
 h_X := h_{X,\Sigma}= \sup_{\zeta\in \Sigma} \inf_{\eta\in X} \|{\zeta}-{\eta}\|_{\ell^2(\R^d)} ,
\]
the estimate
\begin{equation}\label{eq:conv est}
  \|   f-\interp f\|_{\H^j(\Omega)}\leq Ch_X^{\solm-j}\|  f\|_{\solHm(\Omega)}
\end{equation}
holds for $0\leq j \leq \solm$ with some generic constant $C$ independent of $f$.
\end{lemma}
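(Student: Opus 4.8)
The plan is to work entirely inside the native-space framework, using that the two-sided Fourier bound \eref{eq:kernelproperty} makes the reproducing kernel Hilbert space of $\Phim$ norm-equivalent to $\rbfHm$ both on $\Sigma$ and, through the isometric extension operator \cite[Thm.~10.46]{Wendland} already invoked in the text, on $\R^d$. I would assemble \eref{eq:conv est} from two ingredients: a global stability estimate coming from the orthogonal-projection character of $\interp f$, and a Sobolev bound for functions with scattered zeros applied on $\Sigma$. The final task is then to promote the resulting estimate on $\Sigma$ to the larger domain $\Omega$, which is where the real difficulty lies.

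First I would record the stability bound. Because $\interp f$ interpolates $f$ on $X$ and is the minimum-norm interpolant, it coincides with the native-space orthogonal projection of $f$ onto $\trialsp$, so $f-\interp f$ is native-orthogonal to $\trialsp$. The Pythagorean identity gives $\|f-\interp f\|_{\rbfHm}\le\|f\|_{\rbfHm}$, and the norm equivalence furnished by \eref{eq:kernelproperty} upgrades this to $\|f-\interp f\|_{\solHm(\Omega)}\le C\|f\|_{\solHm(\Omega)}$. This already disposes of the endpoint $j=\solm$ (where $h_X^{\solm-j}=1$) and, more importantly, supplies the high-order norm that the sampling inequality will consume.

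Next, since $\Sigma$ is compact with an interior cone condition and $C^\rbfm$-smooth boundary, I would invoke a Sobolev bound for functions vanishing on a scattered set: as $(f-\interp f)|_X=0$ and $X$ has fill distance $h_X=h_{X,\Sigma}$, one obtains $\|f-\interp f\|_{\H^j(\Sigma)}\le Ch_X^{\solm-j}\|f-\interp f\|_{\solHm(\Sigma)}$ for $0\le j\le\solm$ and all sufficiently small $h_X$. Feeding the stability bound into the right-hand side yields \eref{eq:conv est} with $\Omega$ replaced by $\Sigma$.

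The remaining step, passing from $\Sigma$ to $\Omega$, is the main obstacle. Since every center lies in $\Sigma$ and $\Phim$ is compactly supported, both $f$ and $\interp f$ vanish outside $\Sigma_{\text{ext}}$ by \eref{eq:Omega-def}, so $\|f-\interp f\|_{\H^j(\Omega)}=\|f-\interp f\|_{\H^j(\Sigma_{\text{ext}})}$ and everything reduces to the thin layer $\Sigma_{\text{ext}}\setminus\Sigma$. The trouble is that $X$ places no centers in this layer, so the scattered-zeros inequality does not apply there directly, and the error is a genuine extrapolation quantity controlled a priori only at the stability order $O(\|f\|_{\solHm(\Omega)})$ rather than at the sharp order $h_X^{\solm-j}$. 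The device I would use to close this gap is precisely the isometric extension operator: regarding $f-\interp f$ through its natural extension keeps its native norm under control, and the compact support together with the high-order vanishing of $\Phim$ at the edge of its support suppresses the interpolant's tail in the layer; combining this decay with the sharp $\H^j(\Sigma)$ estimate across $\partial\Sigma$ should recover the factor $h_X^{\solm-j}$ in $\Sigma_{\text{ext}}\setminus\Sigma$. Establishing that the layer term inherits the full convergence order, and not merely the stability order, is the delicate point on which I would spend the most care.
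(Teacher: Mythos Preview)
Your first two steps---the native-space stability bound and the sampling inequality on $\Sigma$---are sound and do deliver \eref{eq:conv est} with $\Omega$ replaced by $\Sigma$. The gap is exactly where you locate it: the layer $\Sigma_{\text{ext}}\setminus\Sigma$. The mechanism you sketch (the interpolant's tail ``decays'' in the layer because $\Phim$ vanishes to high order at the edge of its support) does not produce the factor $h_X^{\solm-j}$: that edge behaviour is an $O(1)$ phenomenon independent of $h_X$, and there is no data in the layer to which a sampling inequality could be applied. As you yourself concede, without a genuinely sharp extrapolation bound you are stuck at the stability order there, and the hoped-for ``should recover'' is not a proof.

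The paper avoids the layer entirely by lifting the whole estimate to $\R^d$ rather than splitting $\Omega$ into $\Sigma$ and a collar. The key observation you are missing is that $\text{supp}(f)\subseteq\Sigma$ by the definition of $\Vm$, so the Sobolev extension $E_{\Sigma\to\R^d}f$ is simply the zero extension; the interpolant is already globally defined through the isometric native-space extension. Hence
\[
\|f-\interp f\|_{\H^j(\Omega)}=\|E_{\Sigma\to\R^d}f-\interp f\|_{\H^j(\Omega)}\le\|E_{\Sigma\to\R^d}f-\interp f\|_{\H^j(\R^d)},
\]
and the right-hand side is exactly the quantity controlled in the standard convergence proof \cite[Thm.~4.1]{wendland1999meshless}, whose analysis (power function or native orthogonality) is carried out in $\R^d$ before any restriction to the domain. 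One then comes back down to $\Omega$ trivially, and continuity of the (here, zero) extension gives $\|E_{\Sigma\to\R^d}f\|_{\solHm(\R^d)}\le C\|f\|_{\solHm(\Omega)}$. In short: instead of trying to push the $\Sigma$-estimate outward across $\partial\Sigma$, go up to $\R^d$ first and the layer never appears.
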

\begin{proof}
 We reuse the convergence proof \cite[Thm. 4.1]{wendland1999meshless} of kernel interpolation in $\Sigma$,
 in which all analysis were carried out in $\R^d$ (either by power function or by orthogonality of interpolants). Estimates were brought back to $\Sigma$ essentially based on the continuity of the Sobolev extension involved (due to smoothness of $\Omega$ and $\Phi_m$.)
By construction, the Sobolev extension $E_{{\Sigma \to\R^d}}$ of $f\in{\Vm}$, whose support is strictly in $\Sigma$ by \eref{eq:AssumOfu}, \eref{eq:solsp}, and \eref{eqUm},  becomes the zero extension. Using  natural extension on the interpolant, we have
\[
 \|   f-\interp f\|_{\H^j(\Omega)} =  \|  E_{{\Sigma \to\R^d}} f-\interp f\|_{\H^j(\Omega)}
 \leq  \|  E_{{\Sigma \to\R^d}} f-\interp f\|_{\H^j(\R^d)},
\]
which brings us back to the same analysis in $\R^d$.
Therefore, $\|E_{{\Sigma \to\R^d}} f\|_{\H^m(\R^d)} \leq C \|f\|_{\H^m(\Sigma)}
\leq C' \| f \|_{\H^m(\Omega)}$ firstly by continuity of $E_{{\Sigma \to\R^d}}$ (which is indeed an equality) and then a trivial bound. 
%
\end{proof}

We now have the finite dimensional subspace $\trialsp$ ready for spatial discretization in the Galerkin method. To project functions $u$ from $\Vm$ to $\trialsp$, we define the $L^2$-orthogonal projection $\Po$
  by
\begin{equation}\label{eq:NLWProjection}
    \IP{\Po u}{\chi}
    =\IP{u}{\chi}\mbox{\quad for all }\chi\in\trialsp,
\end{equation}
and Ritz projection $\RN$  by
\begin{equation}\label{eq:ProjOpProp}
\IP{\nabla \RN u}{ \nabla \chi}=\IP{\nabla u}{ \nabla \chi}
\mbox{\quad for all } \chi\in\trialsp.
\end{equation}
These operators will be used on the nonlinear term $F'$, and initial conditions $\icz$, and $\icf$, in \pde respectively.
In the followings, we will discuss the error estimate of two projection operators. Though  \lref{lemma:Projection} about projection operator $\mathcal{P}$ is not needed in the analysis of this work, we present it here for completeness.

\begin{lemma}\label{lemma:Projection}
Suppose the assumptions in \lref{lem:InterpProp} hold. Let $I$ be a bounded interval of $\mathbb{R}$.  Suppose that a nonlinear smooth function $F':I\rightarrow\mathbb{R}$ satisfy $F'(0)=0$. Then for any trial function $\chi\in\trialsp$ with $\text{Range}(\chi)\subseteq I$, we have
\begin{eqnarray*}
    \|\Po F'(\chi)-F'(\chi)\|_{L^2(\Omega)}
    &\leq& C h_X^{\solm} \|  F'(\chi) \|_{\solHm(\Omega)}
    \\
    &\leq&   Ch_X^{\solm} (1+\|\chi\|_{L^{\infty}(\Omega)})^m \|\chi\|_{\solHm(\Omega)}    ,
\end{eqnarray*}
for some constant $C$
independent of  $h_X$ and $\chi$.

\end{lemma}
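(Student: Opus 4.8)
The plan is to establish the two displayed inequalities separately. The first follows from the best-approximation property of the $L^2$-projection, while the second is a Moser-type composition estimate with the dependence on $\|\chi\|_{L^\infty(\Omega)}$ made explicit. The hypothesis $F'(0)=0$ is used to control the support: since $F'$ vanishes at the origin, $\text{supp}(g)\subseteq\text{supp}(\chi)\subseteq\Sigma_{\text{ext}}\subseteq\Omega$ for $g:=F'(\chi)$, so that once its $\solHm(\Omega)$-regularity is supplied by the second inequality, $g$ is a compactly supported function in $\solHm(\Omega)$ to which \lref{lem:InterpProp} applies.

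For the first inequality I would use that, by \eref{eq:NLWProjection}, $\Po g$ is the $L^2(\Omega)$-orthogonal projection of $g$ onto $\trialsp$, hence the best $L^2$-approximation to $g$ from that subspace. Comparing against the interpolant $\interp g\in\trialsp$ gives
\[
  \|\Po g-g\|_{L^2(\Omega)}=\min_{w\in\trialsp}\|w-g\|_{L^2(\Omega)}\leq\|\interp g-g\|_{L^2(\Omega)},
\]
and \lref{lem:InterpProp} with $j=0$ bounds the right-hand side by $Ch_X^{\solm}\|g\|_{\solHm(\Omega)}=Ch_X^{\solm}\|F'(\chi)\|_{\solHm(\Omega)}$, which is exactly the first claimed estimate.

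The substance is the second inequality, $\|F'(\chi)\|_{\solHm(\Omega)}\leq C(1+\|\chi\|_{L^\infty(\Omega)})^m\|\chi\|_{\solHm(\Omega)}$. I would expand an arbitrary derivative $\partial^\alpha F'(\chi)$ with $|\alpha|=k\leq\solm$ by the Fa\`a di Bruno formula into a finite sum of terms of the form $F'^{(j)}(\chi)\prod_{i=1}^{j}\partial^{\beta_i}\chi$ with $1\leq j\leq k$, each $|\beta_i|\geq1$, and $\sum_i|\beta_i|=k$. Since $\text{Range}(\chi)\subseteq I$ and $F'$ is smooth on the bounded interval $I$, every factor $\|F'^{(j)}(\chi)\|_{L^\infty(\Omega)}$ is bounded by $\sup_{y\in I}|F'^{(j)}(y)|$, a constant depending only on $F'$ and $I$. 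The product of derivatives is then controlled by the Gagliardo--Nirenberg product (Moser) inequality
\[
  \Big\|\prod_{i=1}^{j}\partial^{\beta_i}\chi\Big\|_{L^2(\Omega)}\leq C\,\|\chi\|_{L^\infty(\Omega)}^{\,j-1}\,\|\chi\|_{\H^{k}(\Omega)},
\]
so that each term contributes at most $C(1+\|\chi\|_{L^\infty(\Omega)})^{\,j-1}\|\chi\|_{\solHm(\Omega)}$. The zeroth-order term $k=0$ is handled separately using $F'(0)=0$ and the mean value theorem, giving $\|F'(\chi)\|_{L^2(\Omega)}\leq\sup_{y\in I}|F''(y)|\,\|\chi\|_{L^2(\Omega)}$. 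Summing over the finitely many multi-indices $\alpha$ with $|\alpha|\leq\solm$ and bounding the worst power $j-1\leq\solm-1\leq m$ yields the claim.

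The main obstacle is precisely this composition estimate: one must track how the power of $\|\chi\|_{L^\infty(\Omega)}$ grows with the number of derivative factors in the Fa\`a di Bruno expansion and verify it never exceeds $m$, and one must invoke the correct Gagliardo--Nirenberg interpolation inequalities (valid on the bounded domain $\Omega$ satisfying the interior cone condition) to redistribute the $k$ derivatives among the $j$ factors down to a single top-order $\H^k$ norm. Once the explicit $(1+\|\chi\|_{L^\infty(\Omega)})^m$ dependence is secured, the two inequalities chain together to give the stated result.
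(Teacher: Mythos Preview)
Your argument is correct and follows essentially the same two-step strategy as the paper: first the best-approximation property of the $L^2$-projection combined with \lref{lem:InterpProp}, then a Moser-type composition estimate for $\|F'(\chi)\|_{\solHm(\Omega)}$. The only difference is that the paper does not carry out the Fa\`a di Bruno / Gagliardo--Nirenberg argument explicitly but instead invokes a ready-made composition estimate from the literature (specifically \cite[Prop.~1.4.8]{danchin2005fourier}, which gives $\|F'(\chi)\|_{\solHm(\Omega)}\leq C(1+\|\chi\|_{L^\infty(\Omega)})^m\|F''\|_{W^{m,\infty}(I)}\|\chi\|_{\solHm(\Omega)}$ directly). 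Your sketch is precisely how such a proposition is proved, so the two routes are the same in substance; yours is more self-contained, the paper's is more concise.
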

\begin{proof}
Let $f:=F'(\chi)$. We have $f\in \Vm$ by assumptions and let  $s_f:=\interp f\in\trialsp$ denote its unique interpolant.
The projection relation \eqref{eq:NLWProjection} and the fact that $\Po f -s_f \in \trialsp$  yields
\begin{eqnarray*}
  \IPOmega{ \Po f - f}{ \Po f - f}
  \leq \|\Po f - f\|_{L^2(\Omega)} \|s_f - f\|_{L^2(\Omega)}.
\end{eqnarray*}

Now we can finish the proof by \lref{lem:InterpProp}
\begin{eqnarray*}
  \|\Po f - f\|_{L^2(\Omega)}
  \leq \|s_f - f\|_{L^2(\Omega)}
&\leq& C h_X^\solm \| f\|_{\solHm(\Omega)},
\end{eqnarray*}
and  $\| f\|_{\solHm(\Omega)} = \|F'(\chi)\|_{\solHm(\Omega)} \leq C' (1+\|\chi\|_{L^{\infty}(\Omega)})^m\|F''\|_{W^{m,\infty}(I)} \|\chi\|_{\solHm(\Omega)}$,
which follows from \cite[Prop. 1.4.8]{danchin2005fourier} (with $p=r=2$, $s=\sigma=m$, and $F'=f$).

\end{proof}

\begin{lemma}\label{lemma:RitzProjection}
Suppose the assumptions in \lref{lem:InterpProp} hold.  Then for any function $u\in\Vm$ with a smooth boundary $\Omega$, then
\[
    \|\nabla(\RN u - u)\|_{L^2(\Omega)}\leq C h_X^{\solm-1}\|u\|_{\solHm(\Omega)}
\]
holds for some  constant $C$ independent of $u$. In cases when $d\leq 3$ and $X$ is quasi-uniform, we also have
\[
      \|\RN u - u\|_{L^2(\Omega)}\leq C_{\rho} h_X^{\solm}\|u\|_{\solHm(\Omega)}
\]
for another constant $C_{\rho}$ depending on the mesh ratio of $X$ but independent of $u$.
\end{lemma}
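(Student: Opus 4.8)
The first bound is the standard C\'ea/Galerkin-orthogonality estimate. From the defining relation \eref{eq:ProjOpProp} we have $\IP{\nabla(\RN u-u)}{\nabla\chi}=0$ for every $\chi\in\trialsp$, so for any such $\chi$,
\begin{equation*}
\|\nabla(\RN u-u)\|_{L^2(\Omega)}^2=\IP{\nabla(\RN u-u)}{\nabla(u-\chi)}\le \|\nabla(\RN u-u)\|_{L^2(\Omega)}\,\|\nabla(u-\chi)\|_{L^2(\Omega)}.
\end{equation*}
Dividing shows that $\RN u$ is the best $H^1$-seminorm approximation to $u$ from $\trialsp$; choosing $\chi=\interp u$ and invoking \lref{lem:InterpProp} with $j=1$ gives $\|\nabla(\RN u-u)\|_{L^2(\Omega)}\le\|u-\interp u\|_{\H^1(\Omega)}\le Ch_X^{\solm-1}\|u\|_{\solHm(\Omega)}$, which is the first assertion.

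For the $L^2$ bound I would run an Aubin--Nitsche duality argument. Writing $e:=\RN u-u$, introduce the auxiliary problem of finding $\psi\in\H^1_0(\Omega)$ with $-\Delta\psi=e$ in $\Omega$; since $\partial\Omega$ is smooth, elliptic regularity yields $\psi\in\H^2(\Omega)$ with $\|\psi\|_{\H^2(\Omega)}\le C\|e\|_{L^2(\Omega)}$. Because $u$ is supported in $\Sigma$ and $\RN u\in\trialsp$ is supported in $\Sigma_{\text{ext}}$, the error $e$, and hence $\nabla e$, are supported in $\Sigma_{\text{ext}}$. Testing the dual problem with $e$, Green's identity (the boundary term vanishes as $e$ and $\psi$ vanish on $\partial\Omega$) and the Galerkin orthogonality above give, for every $\chi\in\trialsp$,
\begin{equation*}
\|e\|_{L^2(\Omega)}^2=\IP{\nabla e}{\nabla\psi}=\IP{\nabla e}{\nabla(\psi-\chi)}\le\|\nabla e\|_{L^2(\Sigma_{\text{ext}})}\,\|\nabla(\psi-\chi)\|_{L^2(\Sigma_{\text{ext}})}.
\end{equation*}
The first factor is controlled by the seminorm estimate just proven, so it remains to produce $\chi\in\trialsp$ with $\|\nabla(\psi-\chi)\|_{L^2(\Sigma_{\text{ext}})}\le C_\rho h_X\|\psi\|_{\H^2(\Omega)}$, which would supply the extra power of $h_X$ and, after cancelling one factor of $\|e\|_{L^2(\Omega)}$ using the regularity bound, deliver $\|e\|_{L^2(\Omega)}\le C_\rho h_X^{\solm}\|u\|_{\solHm(\Omega)}$.

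The main difficulty, and the source of the extra hypotheses, is exactly this approximation of the dual solution. Unlike the data to which \lref{lem:InterpProp} applies, $\psi$ is only $\H^2$-regular and, crucially, is \emph{not} compactly supported in $\Sigma$, so its interpolant cannot be invoked verbatim. Two points must be addressed. First, interpolation requires point values, so I need $\psi$ to be continuous; this is guaranteed precisely when $\H^2(\Omega)\hookrightarrow C(\overline{\Omega})$, i.e. when $2>d/2$, which is why the statement restricts to $d\le 3$. Second, approximating a function whose smoothness ($\H^2$) lies below the native-space order $\solm$ forces one to leave the native space, and the corresponding RBF approximation estimate holds for quasi-uniform centres with a constant $C_\rho$ that depends on the mesh ratio of $X$ through an inverse/Bernstein inequality, accounting for the mesh-ratio dependence in the claimed bound. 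The genuinely delicate part is constructing the trial approximant to $\psi$ on the collar $\Sigma_{\text{ext}}\setminus\Sigma$, where $X$ carries no centres; I would handle this by a cutoff together with a zero-extension reduction to \lref{lem:InterpProp} and the quasi-uniform inverse estimate, and this boundary-layer bookkeeping is where the argument requires real care.
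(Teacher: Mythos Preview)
Your argument is essentially the paper's: C\'ea/Galerkin orthogonality with $\chi=\interp u$ and \lref{lem:InterpProp} for the seminorm bound, then an Aubin--Nitsche duality against $-\Delta\psi=\RN u-u$ for the $L^2$ bound, with the hypotheses $d\le 3$ and quasi-uniformity entering exactly where you place them (continuity of $\psi$ and approximation of an $\H^2$ target below the native-space order). The only difference is in the approximation of $\psi$: the paper takes $\chi=\interp\psi$ directly and invokes the Narcowich--Ward ``rough function'' interpolation estimate \cite[Thm.~4.2]{Narcowich+WardETAL-SoboErroEstiBern:06} to obtain $\|\psi-\interp\psi\|_{\H^1(\Omega)}\le C_\rho h_X\|\psi\|_{\H^2(\Omega)}$ (handling the support mismatch with the same zero-extension reasoning as in \lref{lem:InterpProp}), rather than the cutoff-plus-inverse-estimate construction you sketch for the collar $\Sigma_{\text{ext}}\setminus\Sigma$.
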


\begin{proof}
All constants $C$ below are generic.
The definition of $\RN$ in  \eqref{eq:ProjOpProp} suggests that
$
 \IP{\nabla(\RN u-u)}{\nabla \chi}=0\mbox{\quad for any }\chi\in \trialsp.
$
If we use $\chi = \interp u \in\trialsp$, then
        \begin{equation*}
        \begin{aligned}
        \big\|\nabla(\RN u-u)\big\|_{L^2(\Omega)}^2
        =&\IP{\nabla(\RN u-u)}{\nabla (\RN u-\chi+\chi-u)}\\
        =&\IP{\nabla(\RN u-u)}{\nabla (\chi-u)}\\
        \leq & \big\|\nabla(\RN u-u)\big\|_{L^2(\Omega)}\,\big\|\chi-u\big\|_{\H^1(\Omega)}
        \end{aligned}
        \end{equation*}
Applying \lref{lem:InterpProp} gives an estimate for the $\H^1$-seminorm of $\RN u-u$:
\begin{equation}\label{eq:RitzProjEsti}
\big\|\nabla(\RN u-u)\big\|_{L^2(\Omega)}
\leq \big \| \interp u -u \big\|_{\H^1(\Omega)}
\leq C h_X^{\solm-1}\|u \|_{\solHm(\Omega)} .
\end{equation}
Poincar\'{e} inequality allows us to bound $\big\|\RN u-u\big\|_{L^2(\Omega)}\leq C \big\|\nabla(\RN u-u)\big\|_{L^2(\Omega)}$ by the same estimate.
To do better, consider the Poisson equation
\begin{equation}
-\Delta \psi = \varphi =: \RN u -  u \text{ in } ~\Omega,
\qquad
\psi=0\text{ on } \partial\Omega,
\end{equation}
as in the proof of \cite[Thm. 1.1]{thomee2006Galerkin}.
With the given smoothness in PDE data, we know the Poisson solution $\psi$ exists.
We now use this solution $\psi$ to yield
\begin{eqnarray*}
  \big\|\RN u-  u\big\|_{L^2(\Omega)}^2
  &=&  \IP{\varphi}{\varphi}=  \IP{\varphi}{-\Delta \psi}
  =  \IP{\nabla\varphi}{\nabla \psi}.
\end{eqnarray*}
By applying \eref{eq:ProjOpProp} with $\chi =\interp \psi \in\trialsp$, we have
\begin{eqnarray}
    \big\|\RN u-  u\big\|_{L^2(\Omega)}^2
    &=& \IP{\nabla \varphi}{\nabla \psi - \nabla \chi}
    \nonumber \\
    &\leq&  \big\|\nabla(\RN u-  u)\big\|_{L^2(\Omega)} \big\|\nabla(\psi-\chi)\big\|_{L^2(\Omega)}
    \nonumber  \\
    &\leq& C   h_X^{\solm-1}\|u \|_{\solHm(\Omega)}  \big\| \psi -\interp \psi \big\|_{\H^1(\Omega)},
    \label{eq:pre-headach}
\end{eqnarray}
where the last inequality is obtained by applying \eref{eq:RitzProjEsti}. To complete the proof, we treat $\psi$ as a $\H^2(\Omega)$ function that is outside the native space.  By imposing $2>d/2$ and $X$ being quasi-uniform with bounded mesh ratio,
the interpolation error estimate
\cite[Thm. 4.2]{Narcowich+WardETAL-SoboErroEstiBern:06}
can be deployed, due to similar arguments as in the proof of \lref{lem:InterpProp},
to get
\begin{equation}\label{eq:headache}
 \big\| \psi -\interp \psi \big\|_{\H^1(\Omega)}
\leq
C_\rho h_X \big\| \psi\big\|_{\H^2(\Omega)}.
\end{equation}
Combining \eref{eq:pre-headach}, \eref{eq:headache}, and a regularity estimate of Poisson equations, i.e.,
\[
\big\| \psi\big\|_{\H^2(\Omega)} \leq C \big\| \varphi \big\|_{L^2(\Omega)}= C \big\| \RN u-  u \big\|_{L^2(\Omega)},
\]
yields the desired $L^2$-estimate of $\RN u-u$.
\end{proof}

\section{Semi-discretize solution and error estimates}
\label{Sec:3}

Using the finite dimensional subspace in \sref{Sec:Radial basis functions and approximation spaces}, we propose to discretize the Galerkin formulation \eqref{eq:NLWGalerkin} by seeking
the solution
\begin{equation}\label{eq:solutionRepresention}
    u_X(x,t)=\sum_{j=1}^N \alpha_{j}(t)\,\Phi_m(x-x_j) \in \trialsp
\end{equation}
with time dependent coefficients $\alpha_{j}\in C^2([0,T])$
for all $j =1,\ldots,N$, that solves the discretized equation
\begin{equation}\label{eq:NLWGalerkinDiscrete2}
    \IP{\dtt{u}_X}{\chi}+\IP{\nabla u_X}{\nabla \chi}+\IP{\Po F'(u_X)}{\chi}=0
    \mbox{\quad for all } \chi\in \trialsp,
\end{equation}
subject to initial conditions
\begin{equation}\label{eq:NLWGalerkinDiscrete2IC}
    u_X(\cdot, 0) = \RN \icz
    \mbox{\quad and \quad}
    \dt{u}_X(\cdot, 0) = \RN \icf
    \mbox{\quad in }  \Omega.
\end{equation}
We remark that \eref{eq:NLWGalerkinDiscrete2} differs from standard Galerkin approach in \cite{kunemund2019high} by an additional $\Po$ projection of the nonlinearity, whose purpose is energy conservation that will be discussed in the coming section. For now, we study the convergence behaviour of \semipde~    that agrees with well-known finite element results on Galerkin approximation of second order hyperbolic equations.

\begin{theorem}\label{thm:NLWSemidiscrete}
Let $u^* \in C^2([0,T];\Vm)$ denote the  exact solution  to \pde and $u_X(t)\in\trialsp$ with $C^2([0,T])$ coefficients solves \semipde.
Suppose the assumptions in \lref{lem:InterpProp}--\ref{lemma:RitzProjection} hold for $m\geq1$ and $m>d/2$.
Then there exist constants $C$ independent of $t$, $h_{X}$, and $u_X$ such that
    \begin{equation}\label{eq:ErrEst1}
    \big\|u^*(\cdot, t)-u_X(t)\big\|_{L^2(\Omega)} \leq Ch_{X}^{\solm-j}\| u^*\|_{\H^2([0,T];\solHm(\Omega))},
    \end{equation}
holds for $j=0$  in any dimension $d\leq 3$ and for $j=1$ otherwise,
    and
    \begin{equation}\label{eq:ErrEst2}
    \big\|\nabla u^*(\cdot, t)-\nabla u_X(t)\big\|_{L^2(\Omega)} \leq Ch_{X}^{\solm-1} \| u^*\|_{\H^2([0,T];\solHm(\Omega))}.
    \end{equation}
hold for any $t\in[0,T]$.
\end{theorem}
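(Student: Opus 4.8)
The plan is to adapt the classical Ritz-projection (or ``elliptic projection'') argument used in finite-element analysis of second-order hyperbolic problems. I would split the error with the Ritz projection $\RN$ from \eref{eq:ProjOpProp},
\[
u^*(\cdot,t)-u_X(t)=\rho(t)+\theta(t),\qquad \rho:=u^*-\RN u^*,\quad \theta:=\RN u^*-u_X\in\trialsp ,
\]
so that $\rho$ captures the projection error (handled directly) while $\theta$ is a trial function that can be fed back into the Galerkin equation as a test function. \lref{lemma:RitzProjection} controls $\rho$: in every dimension $\|\nabla\rho\|_{L^2(\Omega)}\le Ch_X^{\solm-1}\|u^*\|_{\solHm(\Omega)}$, while the sharp bound $\|\rho\|_{L^2(\Omega)}\le Ch_X^{\solm}\|u^*\|_{\solHm(\Omega)}$ is only available for $d\le3$ with quasi-uniform $X$; for $d>3$ Poincar\'e downgrades the rate to $h_X^{\solm-1}$. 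This dichotomy is precisely the source of the index $j$ in \eref{eq:ErrEst1}.

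Next I would derive an evolution equation for $\theta$. Subtracting the discrete scheme \eref{eq:NLWGalerkinDiscrete2} from the exact Galerkin identity \eref{eq:NLWGalerkin}, both tested against $\chi\in\trialsp\subset\H^1_0(\Omega)$, and invoking the defining property \eref{eq:ProjOpProp} of $\RN$ to kill the cross term $\IP{\nabla\rho}{\nabla\chi}=0$, gives
\[
\IP{\dtt\theta}{\chi}+\IP{\nabla\theta}{\nabla\chi}=-\IP{\dtt\rho}{\chi}-\IP{F'(u^*)-F'(u_X)}{\chi},\qquad \chi\in\trialsp .
\]
Here the extra projection $\Po$ has quietly disappeared: since $\chi\in\trialsp$ and $\Po$ is the $L^2$-orthogonal projection onto $\trialsp$, one has $\IP{\Po F'(u_X)}{\chi}=\IP{F'(u_X)}{\chi}$, which is exactly why \lref{lemma:Projection} is not needed in the analysis. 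Because $\RN$ commutes with $\partial_t$, we have $\dt\theta=\RN\dt{u}^*-\dt{u}_X\in\trialsp$, so I may take $\chi=\dt\theta$; the left-hand side then collapses to $\tfrac12\frac{d}{dt}E(t)$ with the energy $E(t):=\|\dt\theta\|_{L^2(\Omega)}^2+\|\nabla\theta\|_{L^2(\Omega)}^2$.

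The two forcing terms are estimated as follows. Writing $\dtt\rho=(I-\RN)\dtt{u}^*$ and applying \lref{lemma:RitzProjection} bounds $\|\dtt\rho\|_{L^2(\Omega)}$ by $Ch_X^{\solm-j}\|\dtt{u}^*\|_{\solHm(\Omega)}$; Lipschitz continuity of $F'$ on the bounded range of the solutions bounds the nonlinear term by $L\|u^*-u_X\|_{L^2(\Omega)}\le L(\|\rho\|_{L^2(\Omega)}+C_P\|\nabla\theta\|_{L^2(\Omega)})$, using Poincar\'e on $\theta\in\trialsp\subset\H^1_0(\Omega)$. After Cauchy--Schwarz and Young's inequality, these assemble into a differential inequality $\frac{d}{dt}E(t)\le C_1E(t)+C_2\,g(t)$ whose forcing $g(t)$ is $O\big(h_X^{2(\solm-j)}\big)$ uniformly in $t$.

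Finally I would integrate by Gronwall's lemma. The initial data prescribed in \eref{eq:NLWGalerkinDiscrete2IC} give $\theta(0)=\RN\icz-\RN\icz=0$ and $\dt\theta(0)=\RN\icf-\RN\icf=0$, so $E(0)=0$ and hence $E(t)^{1/2}\le Ch_X^{\solm-j}$ for all $t\in[0,T]$. Combining $\|\theta\|_{L^2(\Omega)}\le C_P E(t)^{1/2}$ and $\|\nabla\theta\|_{L^2(\Omega)}\le E(t)^{1/2}$ with the $\rho$-estimates through the triangle inequality then yields \eref{eq:ErrEst1} and \eref{eq:ErrEst2}. The step I expect to be the genuine obstacle is making the Lipschitz constant $L$ of $F'$ uniform in time, which requires an a priori $L^\infty$ bound on $u_X$ keeping its range inside a fixed interval on which $F'$ is Lipschitz. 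The cleanest remedy is a continuation/bootstrapping argument: assume $u_X$ stays in a fixed neighbourhood of $u^*$ on a maximal subinterval of $[0,T]$, establish the error bound there, and then use the smallness of the error to show $u_X$ cannot leave that neighbourhood, so the subinterval is in fact all of $[0,T]$.
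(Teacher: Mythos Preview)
Your proposal is correct and follows essentially the same Ritz-projection/energy--Gronwall strategy as the paper: the paper offloads the step from the error equation \eref{eq:ErrEq1_new} to the estimate \eref{eq:GronwallEst1} to Dupont's classical argument, which is precisely the $\chi=\dt\theta$ energy identity you write out, and it recovers $\|\theta\|_{L^2}$ by integrating $\|\dt\theta\|$ in time rather than by Poincar\'e (yielding the same rate). Your discussion of the $L^\infty$-bootstrap needed to make the Lipschitz constant of $F'$ uniform is in fact more careful than the paper, which simply invokes ``Lipschitz continuity of $F'$'' without further comment.
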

\begin{proof}
For $t\in[0,T]$, we analyze the error function in parts:
\[
u(\cdot,t)-u_X(t)=\underbrace{u(\cdot,t)-\RN u(\cdot,t)}_{\displaystyle:=\eta(t)}+\underbrace{\RN u(\cdot,t)- u_X(t)}_{\displaystyle:=\theta(t)}.
\]
Convergence estimates of $\eta(t)$ were done in \lref{lemma:RitzProjection} and we only need to work on $\theta(t)$.
Subtracting  \eqref{eq:NLWGalerkinDiscrete2} from \eqref{eq:NLWGalerkin} yields
\begin{eqnarray}
\IP{\dtt{\eta}+\dtt{\theta}}{ \chi}+
 \IP{\nabla\eta+\nabla\theta}{ \nabla \chi}
&=&\IP{\Po F'(u_X)-F'(u)}{\chi} \nonumber
\end{eqnarray}
for all $\chi\in\trialsp$.  Using definitions \eref{eq:ProjOpProp} and \eref{eq:NLWProjection} of $\Po$ and $\RN$ can further simplify the above to
\begin{eqnarray}
\IP{\dtt{\eta}+\dtt{\theta}}{ \chi}+ \IP{\nabla\theta}{ \nabla \chi} &=& \IP{F'(u_X)-F'(u)}{\chi}
\mbox{\quad for all }\chi\in\trialsp
\label{eq:ErrEq1_new}
\end{eqnarray}
which coincides with the finite elements counterparts.  Now that $\Po$ disappear; one can conclude that using $\Po$ in \eref{eq:NLWGalerkinDiscrete2} does not affect convergence.

Using the Lipschitz continuity of $F'$ and following the same nontrivial manipulations as in proof of  \cite[Thm. 1]{Dupont-EstiGalerMeth:73} from equations (2.8) to (2.10) there, one can obtain an estimate that
\begin{equation}\label{eq:GronwallEst1}
    \|\dt{\theta}(t)\|^2_{L^2(\Omega)}+\|\nabla\theta(t)\|^2_{L^2(\Omega)}
\leq C_T\int_{0}^{t}\Big(\|\dtt{\eta}(s)\|_{L^2(\Omega)}^2+  \|\eta(s)\|_{L^2(\Omega)}^2\Big)ds
\end{equation}
for $0\leq t\leq T$ and some constant $C_T$ depending on $T$.
We use \lref{lemma:RitzProjection} to bound \eref{eq:GronwallEst1} further and obtain
\begin{eqnarray}
    \|\dt{\theta}(t)\|_{L^2(\Omega)}+\|\nabla\theta(t)\|_{L^2(\Omega)}
    &\leq& Ch_X^{\solm}\left(\int_{0}^{t}\Big(\|\dtt{u}(\cdot,s)\|_{\solHm(\Omega)}^2+\|u(\cdot,s)\|_{\solHm(\Omega)}^2 \Big) ds\right)^{1/2} \nonumber
    \\
    &\leq& C h_X^{\solm} \| u \|_{\H^2([0,t];\solHm(\Omega))}
    \mbox{\quad for }t\in[0,T].
    \label{eq:GronwallEst3}
\end{eqnarray}
Maximizing the upper bound with $t=T$ proves \eref{eq:ErrEst2}. To finish the proving of \eref{eq:ErrEst1}, we use
\[
    \|\theta(t)\|
    \leq\|\theta(0)\|+\int_{0}^{t}\|\dt{\theta}(s)\|ds
    \leq\|\theta(0)\|+t\max_{0\leq s\leq t}\|\dt{\theta}(s)\|.
\]
In both estimates, we see that convergence rates were indeed limited by \lref{lemma:RitzProjection}, but \eref{eq:GronwallEst3} imposes higher regularity requirement  in time to the solution.
\end{proof}

\subsection{Some linear algebras}\label{sec:LA}
We complete this section by expressing the semi-discretized system \semipde~ in matrix form.
The solution is in the form of \eref{eq:solutionRepresention}, or in compact notations
\begin{equation}\label{eq:trial sol}
    u_X(\cdot,t)= \Phim(\cdot,X)  \balpha(t)  \in \trialsp
    \mbox{\quad for some } \balpha(t) \in \big(C^2([0,T])\big)^N,
\end{equation}
in which we allow the kernel to operate on sets, i.e., on $X=\{x_1,\ldots,x_{|X|}\}$ and $Z=\{z_1,\ldots,z_{|Z|}\}$, to yield matrices
\[
    \Phim(Z,X) \in \R^{|Z|\times|X|}
    \mbox{\quad with entries }[\Phim(Z,X)]_{ij}= \Phim(z_i-x_j).
\]
Next, we shall obtain the matrix form of equation \eqref{eq:NLWGalerkinDiscrete2} by using $\chi=\Phim(\cdot-x_k)$ for $k=1,\ldots,N$. Handling the linear terms are straight forwarded.
By using the definition of projection operator \eqref{eq:NLWProjection} and 
the vector function $\bvartheta$ of time is defined by entries $[\bvartheta]_j= \IPOmega{ F'(u_X)}{\Phim(\cdot-x_j)}$ ($j=1\ldots,|Z|$),
the semi-discrete equation  \eqref{eq:NLWGalerkinDiscrete2} becomes
\begin{equation}\label{eq:NLWGalerkinDiscrete3}
\mtxA \dtt{\balpha}(t)+\mtxB\balpha(t)+
\bvartheta 
=0,
\end{equation}
for two time-independent Gramian matrices of size  $N\times N$ with entries
\[
[\mtxA]_{jk}= \IPOmega{\Phim(\cdot-x_j)}{\Phim(\cdot-x_k)},
\]
and
\begin{eqnarray*}
[\mtxB]_{jk} &=& \IPOmega{\nabla\Phim(\cdot-x_j)}{\nabla\Phim(\cdot-x_k)}
\\ &=&   
\sum_{\scriptsize \begin{array}{c}
   |\xi|\!=\!1, 
   \xi\!\in\!\mathbb{N}_0^d
 \end{array}}
\IPOmega{D^{\xi}\Phim(\cdot-x_j)}{D^{\xi}\Phim(\cdot-x_k)}.
\end{eqnarray*}
Note, when deriving \eref{eq:NLWGalerkinDiscrete3}, the  projected nonlinearity $\mathcal{P}F'(u_X)\in\trialsp$ at some time $t$ takes the expansion
\begin{equation}\label{eq:projectionRepresention}
     \Po F'(u_X)(\cdot,t) 
    = \Phim(\cdot,X)\Big(
    \underbrace{ (\mtxA)^{-1} \bvartheta }_{=: \bgamma(t)}
    \Big)
\end{equation}
with the kernel coefficient vector $\bgamma$ depending implicitly  on $u_X$ and/or $\balpha$.


Lastly, we need to work out the matrix formulas for the initial conditions.
By the definition of Ritz projection, we obtain formulas for initial conditions of $\balpha$ and $\dt{\balpha}$,
\[
    \balpha(0) = (\mtxB)^{-1}\IP{\nabla\icz}{\nabla\Phim(\cdot-x_j)},
\]
\[
    \dt{\balpha}(0) = (\mtxB)^{-1}\IP{\nabla\icf}{\nabla\Phim(\cdot-x_j)}.
\]
Clearly, $\nabla\Phim(\cdot-x_j)$  for $x_j\in X$ forms a set of independent functions. Thus, the Gramian matrix $\mtxB$ is positive definite and invertible.
%

\section{Energy conservation}\label{sec:Energy conservation}

Let $\dt{\balpha}(t) = \bbeta(t)$ and we rewrite {\semipde} as a system of first order differential equations:
\begin{equation}\label{eq:NLWsemidiscrete}
\left\{
  \begin{array}{lcll}
    \dt{\balpha} &=& \bbeta, & t\in[0,T]\\
    \dt{\bbeta} &=& -(\mtxA)^{-1}\mtxB \balpha -\bgamma, & t\in[0,T]\\
    \balpha(0) &=& \balpha_0, &\\
    \bbeta(0) &=& \bbeta_0. &\\
  \end{array}
\right.
\end{equation}
We now verify the energy conservation property of the semi-discrete solution $u_X$ in \eref{eq:trial sol} in the sense of  \eref{eq:s3energyconservation}. Using the coefficient vector $\balpha$, we got
\begin{equation}\label{eq:NLWDisctEnergy}
\begin{aligned}
E(t;u_X) &= \int_{\Omega}\left(\frac{1}{2} (\dt{u}_X)^2+\frac{1}{2}|\nabla u_X|^2+F(u_X)\right)dx
\\
&=\frac{1}{2} \bbeta^T \big(\mtxA\big){\bbeta} + \frac{1}{2}{\balpha}^T\big(\mtxB\big){\balpha} +\IPOmega{F(u_X)}{1}.
\end{aligned}
\end{equation}
Differentiating the energy with respect to time and using symmetry in Gramian, we have
\[
\dt{E}(t;u_X)
= \bbeta^T \big(\mtxA\big)\dt{\bbeta} + {\balpha}^T\big(\mtxB\big)\dt{\balpha} +\IP{F'(u_X)}{\dt{u}_X}.
\]
If we use \eref{eq:NLWsemidiscrete} to remove all first order derivatives, we got a simpler expression
\begin{eqnarray*}
  \dt{E}(t;u_X)
    &=& -\bbeta^T \big(\mtxA\big)\bgamma +\IP{F'(u_X)}{\dt{u}_X}
    \\ &=& -\IP{\Po F'(u_X)}{\dt{u}_X}+\IP{F'(u_X)}{\dt{u}_X} =0,
\end{eqnarray*}
which is zero because of \eref{eq:NLWProjection}, and the fact that $\bbeta$ and $\bgamma$ are coefficient vectors of $\dt{u}_X$ and $\Po F'(u_X)$ in $\trialsp$. We conclude this by a theorem.

\begin{theorem}
The convergent semi-discretize solution $u_X\in\trialsp$ in Thm.~\ref{thm:NLWSemidiscrete} conserves energy in the sense of  \eref{eq:s3energyconservation}.
\end{theorem}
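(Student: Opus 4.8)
The plan is to show that the discrete Hamiltonian $E(t;u_X)$ is constant along the semi-discrete flow, which is exactly the statement that $u_X$ conserves energy in the sense $\dt{E}(t;u_X)=0$ of \eref{eq:s3energyconservation}. First I would start from the coefficient representation \eref{eq:NLWDisctEnergy}; this is available because every element of $\trialsp$ expands in the kernel basis, so $u_X$ carries the coefficient vector $\balpha$ and, the basis functions being time-independent, $\dt{u}_X=\sum_j\dt{\alpha}_j\,\Phim(\cdot-x_j)$ carries $\bbeta=\dt{\balpha}$. Differentiating $E$ in time and using that both Gramians $\mtxA$ and $\mtxB$ are symmetric yields
\[
\dt{E}(t;u_X)=\bbeta^T\mtxA\dt{\bbeta}+\balpha^T\mtxB\dt{\balpha}+\IP{F'(u_X)}{\dt{u}_X}.
\]

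Next I would substitute the first-order system \eref{eq:NLWsemidiscrete}, replacing $\dt{\balpha}$ by $\bbeta$ and $\dt{\bbeta}$ by $-(\mtxA)^{-1}\mtxB\balpha-\bgamma$. The two resulting quadratic contributions $-\bbeta^T\mtxB\balpha$ and $\balpha^T\mtxB\bbeta$ cancel by symmetry of $\mtxB$, leaving
\[
\dt{E}(t;u_X)=-\bbeta^T\mtxA\bgamma+\IP{F'(u_X)}{\dt{u}_X}.
\]

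The decisive step---and the reason the $\Po$ projection was inserted in \eref{eq:NLWGalerkinDiscrete2}---is to read the algebraic term back as an $L^2$ inner product. Since $\bbeta$ and $\bgamma$ are the kernel-expansion coefficients of $\dt{u}_X$ and $\Po F'(u_X)$ (the latter by \eref{eq:projectionRepresention}) and $\mtxA$ is the Gramian of the basis under $\IP{\cdot}{\cdot}$, I would identify $\bbeta^T\mtxA\bgamma=\IP{\dt{u}_X}{\Po F'(u_X)}$. Because $\dt{u}_X\in\trialsp$ is an admissible test function, the defining property \eref{eq:NLWProjection} gives $\IP{\Po F'(u_X)}{\dt{u}_X}=\IP{F'(u_X)}{\dt{u}_X}$, so the two surviving terms cancel exactly and $\dt{E}(t;u_X)\equiv0$. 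I expect the only nonroutine point to be this bilinear-form-to-inner-product identification; it hinges on $\dt{u}_X$ being a genuine member of $\trialsp$ (guaranteed by the $C^2$ regularity of the coefficients $\alpha_j$), which is precisely what licenses the use of \eref{eq:NLWProjection}. The quadratic cancellations and the ODE substitution are purely mechanical.
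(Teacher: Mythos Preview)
Your proposal is correct and follows essentially the same route as the paper: differentiate the coefficient form \eref{eq:NLWDisctEnergy}, substitute the first-order system \eref{eq:NLWsemidiscrete} so that the quadratic terms cancel, then recognize $\bbeta^T\mtxA\bgamma=\IP{\dt{u}_X}{\Po F'(u_X)}$ via the Gramian and finish with the projection identity \eref{eq:NLWProjection} applied to $\chi=\dt{u}_X\in\trialsp$. Your identification of the ``only nonroutine point'' matches exactly what the paper singles out as the reason the argument works.
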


\subsection{Discrete energy conservation}

In practice, it is not straightforward to analytically evaluate integrals of the nonlinear term $\IPOmega{F'(u_N)}{\Phim(x-x_j)}$ in order to compute the coefficient vector $\bgamma$  in \eqref{eq:projectionRepresention}. Next, we move on to a discrete version of $\bgamma$ that is approximated by some quadrature formula.

In the finite dimensional subspace $\trialsp$, we employ the translates of a kernel at scattered nodes $X$. Moreover, we use some sufficiently dense set of quadrature points $Z\subset\Omega$ of size $M$ for numerical integration. Then we can approximate the $L^2$-inner product by a weighted inner product, namely
\begin{equation}\label{eq:QuadratureFormula}
    \IPOmega{f}{g}=\int_{\Omega}f(x)g(x)d x\approx \sum_{j=1}^{M}w_j f(z_j)g(z_j)=\boldsymbol{f}^T W\boldsymbol{g}
    =\IP{\boldsymbol{f}|_{Z}}{\boldsymbol{g}|_{Z}}_{W},
\end{equation}
where $W=\text{diag}(w_1,\ldots,w_M)$ denotes the diagonal matrix of quadrature weight $w_j$ for $j=1,\ldots,M$.
Using the quadrature formula \eqref{eq:QuadratureFormula}, we define the discrete energy as
\begin{equation}\label{eq:NLWEnergy}
\begin{aligned}
E^d(t;u_X)=\frac{1}{2}\bbeta^T{A}\bbeta+\frac{1}{2}\balpha^T{B}\balpha+\big(\mathbf{1}_{M},F({u_X})|_{Z} \big)_{W},
\end{aligned}
\end{equation}
where
\[{A}:=\big[\Phim(Z,X)\big]^TW\big[\Phim(Z,X)\big]
\approx \mtxA\]
and
\[{B}:=
\sum_{\scriptsize \begin{array}{c}
   |\xi|\!=\!1, 
   \xi\!\in\!\mathbb{N}^d_0
 \end{array}}
\big[ D^{\xi}\Phim(Z,X)\big]^T W \big[D^{\xi}\Phim(Z,X)\big]
\approx \mtxB.\]
Instead of \eref{eq:projectionRepresention} and \eref{eq:NLWsemidiscrete}, we now use
\begin{equation}\label{eq:gammad2}
 \boldsymbol{\Upsilon}(t) = {A}^{-1} \big[ \Phim(Z,X)\big]^T W \big[ F'({u_X})|_{Z}\big] \approx \bgamma
\end{equation}
to define another ODE via numerical quadrature \eref{eq:QuadratureFormula} as
\begin{equation}\label{eq:NLWsemidiscreteVers}
\left\{
  \begin{array}{lcll}
    \dt{\balpha} &=& \bbeta, & t\in[0,T]\\
    \dt{\bbeta} &=& -{A}^{-1}{B} \balpha -\boldsymbol{\Upsilon}, & t\in[0,T]
    \\
    \balpha(0) &=&
    \displaystyle
    \sum_{\tiny \begin{array}{c}
   |\xi|\!=\!1 , \xi\!\in\!\mathbb{N}^d_0
 \end{array}}
 {B}^{-1}  \big[D^{\xi}\Phim(Z,X)\big]^TW\big[D^{\xi}u_0(Z)\big],
 &\\
    \bbeta(0) &=&
    \displaystyle
    \sum_{\tiny \begin{array}{c}
     |\xi|\!=\!1, \xi\!\in\!\mathbb{N}^d_0
 \end{array}}
 {B}^{-1} \big[D^{\xi}\Phim(Z,X)\big]^TW\big[D^{\xi}u_1(Z)\big]. &\\
  \end{array}
\right.
\end{equation}
We can again prove that the discrete energy $E^d(t;u_X)$ is conserved by the semi-discrete solution of \eqref{eq:NLWsemidiscreteVers}.
\begin{theorem}
    Let the discrete energy $E^d(t;u_X)$ be defined by the equation \eqref{eq:NLWEnergy} and $\balpha$, $\bbeta$ satisfy the equations \eref{eq:gammad2}--\eqref{eq:NLWsemidiscreteVers}, then the energy $E^d(t;u_X)$ is conserved with respect to time.
\end{theorem}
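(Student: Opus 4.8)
The plan is to mirror the semi-discrete energy conservation argument of the preceding theorem, but with the continuous Gramians $\mtxA,\mtxB$ and the $L^2$ inner product replaced by their quadrature counterparts $A$, $B$ and $(\cdot,\cdot)_W$, and with the role of the projection identity \eref{eq:NLWProjection} now played by the defining relation \eref{eq:gammad2} for $\boldsymbol{\Upsilon}$. First I would differentiate the discrete energy \eref{eq:NLWEnergy} in time. Since each of $A=[\Phim(Z,X)]^T W[\Phim(Z,X)]$ and $B$ is of the form $[\cdots]^T W[\cdots]$ with diagonal (hence symmetric) $W$, both matrices are symmetric, so the two quadratic forms contribute $\bbeta^T A\,\dt{\bbeta}$ and $\balpha^T B\,\dt{\balpha}$, clearing the factors $\tfrac12$.

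The second step is to differentiate the nonlinear term $(\mathbf{1}_M,F(u_X)|_Z)_W=\sum_{j=1}^M w_j F(u_X(z_j,t))$ by the chain rule. Because $u_X(z_j,t)=[\Phim(Z,X)\balpha]_j$ and $\dt{\balpha}=\bbeta$, we have $\dt{u}_X(z_j,t)=[\Phim(Z,X)\bbeta]_j$, so this derivative equals $[F'(u_X)|_Z]^T W[\Phim(Z,X)\bbeta]$. Collecting the three pieces gives $\dt{E}^d=\bbeta^T A\,\dt{\bbeta}+\balpha^T B\bbeta+[F'(u_X)|_Z]^T W[\Phim(Z,X)\bbeta]$.

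Third, I would insert the ODE \eref{eq:NLWsemidiscreteVers}. Substituting $\dt{\bbeta}=-A^{-1}B\balpha-\boldsymbol{\Upsilon}$ yields $\bbeta^T A\,\dt{\bbeta}=-\bbeta^T B\balpha-\bbeta^T A\boldsymbol{\Upsilon}$, and the summand $-\bbeta^T B\balpha$ cancels $\balpha^T B\bbeta$ by symmetry of $B$. What survives is $\dt{E}^d=-\bbeta^T A\boldsymbol{\Upsilon}+[F'(u_X)|_Z]^T W[\Phim(Z,X)\bbeta]$.

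The crux, and the only place the discrete structure enters, is showing these two surviving terms cancel. Here I would invoke \eref{eq:gammad2} in the form $A\boldsymbol{\Upsilon}=[\Phim(Z,X)]^T W[F'(u_X)|_Z]$ to rewrite $\bbeta^T A\boldsymbol{\Upsilon}=[\Phim(Z,X)\bbeta]^T W[F'(u_X)|_Z]$, whereupon symmetry of the diagonal matrix $W$ makes this scalar identical to the chain-rule term, giving $\dt{E}^d=0$. The main obstacle is conceptual rather than computational: one must recognize that $\boldsymbol{\Upsilon}$ is precisely the $W$-weighted coefficient vector that replaces $\bgamma$, so that the cross term generated by the nonlinearity in the dynamics is matched \emph{exactly} by the term coming from the chain rule. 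This is the discrete analogue of the identity $\IP{\Po F'(u_X)}{\dt{u}_X}=\IP{F'(u_X)}{\dt{u}_X}$ that drove the continuous proof, and crucially no quadrature error appears because both occurrences of the nonlinearity are evaluated with the same rule $(\cdot,\cdot)_W$.
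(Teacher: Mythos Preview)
Your proposal is correct and follows essentially the same route as the paper's proof: differentiate $E^d$ using the symmetry of $A$ and $B$, apply the chain rule to the quadrature sum $\sum_j w_j F(u_X(z_j,t))$, substitute the ODE \eqref{eq:NLWsemidiscreteVers} to cancel the $B$-terms, and then invoke the defining relation $A\boldsymbol{\Upsilon}=[\Phim(Z,X)]^T W[F'(u_X)|_Z]$ together with the symmetry of $W$ to kill the remaining two terms. Your added remark that this last cancellation is the discrete analogue of $\IP{\Po F'(u_X)}{\dt u_X}=\IP{F'(u_X)}{\dt u_X}$ is exactly the right way to see why no quadrature error intrudes.
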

\begin{proof}
    Differentiating the energy  with respect to $t$ and using the fact that ${A}$, ${B}$ are symmetric, we obtain
    $$\frac{d}{dt}E^d(t;u_X)=\bbeta^T{A}\dot{\bbeta}+\balpha^T{B}\dot{\balpha}+\sum_{j=1}^Nw_j F'(u_X(z_j,t))\dot{u}_{X}(z_j,t).$$
    Using the ODE \eqref{eq:NLWsemidiscreteVers}, we have
    \begin{equation*}
    \begin{aligned}
    \frac{d}{dt} E^d(t;u_X)&=
    \bbeta^T{A}\big(-{A}^{-1}{B}\balpha-   \boldsymbol{\Upsilon}  \big)
    +\balpha^T{B}\bbeta
    +\big[ F'(u_X(Z,t))\big]^TW\big[\Phim(Z,X)\big]\bbeta\\
    &=-\bbeta^T{A}   \boldsymbol{\Upsilon} 
    + \bbeta^T\big[ \Phim(Z,X)\big]^T W \big[ F'\big(u_X(Z,t)\big)\big]=0,
    \end{aligned}
    \end{equation*}
where the last equality comes from our definition of $ \boldsymbol{\Upsilon} $ in \eref{eq:gammad2}.
\end{proof}

We remark that the choice of quadrature weight $W$ has no importance in the conservation of the energy, but it still affects the error between the discrete approximate energy \eref{eq:NLWEnergy} to the continuous true one \eref{eq:NLWDisctEnergy}.
Numerical integrations in \eref{eq:NLWDisctEnergy}, i.e., the choice of  the weight matrix $W$,  were to be performed on some background regular grids, which is independent to scattered centers used to define \eref{eq:RBFsubspace}.
We have many standard choices of quadrature formulas, such as Simpson's rule, Gauss quadrature rule, the meshless quadrature formula \cite{WuZhang} and so on.
When they were applied to compactly supported waves, we can expect higher than theoretically predicted order of convergence.


\subsection{Energy conservation by fully discretized solution}
To define a  fully discretized solution, it remains to discretize the semi-discrete system in time.
We employ the average vector field (AVF) method \cite{quispel2008new}, which is an energy-conserving method for solving ODE systems.
Consider a first-order ODE in the form of
\begin{equation}\label{eq:ODEs}
\frac{dy}{dt}=f(y), \quad y\in\mathbb{R}^d.
\end{equation}
The second-order AVF method for $y_n:=y(t_n)\longmapsto y_{n+1}:=y(t_n+\tau)$ is given as
\begin{equation}\label{eq:AVFmethod}
\frac{1}{\tau} (y^{n+1}-y^n)=\int_{0}^1 f\big((1-\xi)y_n+\xi y_{n+1}\big)d\xi,
\end{equation}
where $\tau$ is the step size. Error estimate of AVF methods can be found in \cite{cai2016numerical}.
For the linear wave equation with $f(y)=cy$ in \eref{eq:ODEs}, one can see that the second order AVF method is just the midpoint or Crank-Nicolson scheme. For any linear autonomous system of ODEs, the second order AVF method coincides with the midpoint scheme.

We apply  AVF \eref{eq:AVFmethod} to the ODE system \eqref{eq:NLWsemidiscreteVers} to obtain
\begin{equation}\label{eq:NLWfulldiscrete2}
\begin{cases}
\displaystyle
\frac1{\tau}(\balpha^{n+1}-\balpha_u^n)= \frac12 ({\bbeta^n+\bbeta^{n+1}}),
\smallskip\\  
\displaystyle
\frac1{\tau}(\bbeta^{n+1}-\bbeta^n)=
-\frac12{A}^{-1}{B}({\balpha^n+\balpha^{n+1}})
\\ \qquad\qquad\qquad\qquad\qquad
-{A^{-1}\big[\Phim(Z,X)\big]^T W}
\left(\displaystyle\frac{F(u_X^{n+1})-F(u_X^n)}{u_X^{n+1}-u_X^n}\right)\Big|_{Z}.
\end{cases}
\end{equation}
Here in \eref{eq:NLWfulldiscrete2}, we again evaluate functions on sets, i.e., $F(u_X^n)|_{Z} := F\big(u_X(z_j,t^n)\big)$ is a  vector in $\R^M$ that implicitly depends on $\balpha^n$.


By eliminating the intermediate variable $\bbeta$, we recast the fully discretized system \eqref{eq:NLWfulldiscrete2} in terms of $\balpha$ as the following nonlinear ODE
\begin{equation}\label{eq:FullDisNLW2EquivForm}
\frac{\balpha^{n+1}-2\balpha^n+\balpha^{n-1}}{\tau^2}
=-{A}^{-1}{B}\frac{\balpha^{n+1}+2\balpha^n+\balpha^{n-1}}{4}
-{A^{-1}\big[\Phim(Z,X)\big]^T W}{\boldsymbol{q}}, 
\end{equation}
where the $\R^{|X|}\times\R^{|X|}\times\R^{|X|}\to\R^M$ vector function is defined by
\begin{eqnarray*}
{\boldsymbol{q}} = {\boldsymbol{q}}(\balpha^{n+1},\balpha^n,\balpha^{n-1})
&:=&\left( \frac{F(u_X^{n+1})-F(u_X^{n})}{u_X^{n+1}-u_X^{n}}
+\frac{F(u_X^{n})-F(u_X^{n-1})}{u_X^{n}-u_X^{n-1}}  \right)\Big|_{Z}.
\end{eqnarray*}
This implicit equation   can be solved by using iteration methods such as the fixed-point iteration method.


\begin{theorem}\label{thm:EnergyconservationFulldiscrete}
    Let $\balpha^n$ and $\bbeta^n$ be the solution to the ODE system in \eqref{eq:NLWfulldiscrete2}. Then the fully discretized  solution in the form of \eref{eq:trial sol} conserve the discrete energy in the sense \eqref{eq:NLWEnergy}. 
\end{theorem}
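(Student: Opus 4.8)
The plan is to mirror the computation of the preceding (semi-discrete) theorem, but with time-differentiation replaced by the first difference. Writing $E^d_n := \tfrac12(\bbeta^n)^T A\,\bbeta^n + \tfrac12(\balpha^n)^T B\,\balpha^n + \big(\mathbf{1}_M,F(u_X^n)|_Z\big)_W$ for the discrete energy \eqref{eq:NLWEnergy} evaluated at $t^n$, the goal is to show $E^d_{n+1}-E^d_n=0$ identically. Since $A$ and $B$ are symmetric, I would first apply the elementary identity $v^TMv-w^TMw=(v-w)^TM(v+w)$ to each quadratic term, so that the difference of the two quadratic energies becomes $\tfrac12(\bbeta^{n+1}-\bbeta^n)^T A(\bbeta^{n+1}+\bbeta^n)+\tfrac12(\balpha^{n+1}-\balpha^n)^T B(\balpha^{n+1}+\balpha^n)$.

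Next I would substitute the AVF update \eqref{eq:NLWfulldiscrete2}. Its first line gives $\balpha^{n+1}-\balpha^n=\tfrac{\tau}{2}(\bbeta^n+\bbeta^{n+1})$, equivalently $\bbeta^n+\bbeta^{n+1}=\tfrac{2}{\tau}(\balpha^{n+1}-\balpha^n)$; multiplying its second line by $A$ and transposing (using $A^T=A$, $B^T=B$, $W^T=W$) yields
\[
(\bbeta^{n+1}-\bbeta^n)^T A = -\tfrac{\tau}{2}(\balpha^{n+1}+\balpha^n)^T B - \tau\,(\Delta F)^T W\,\Phim(Z,X),
\qquad \Delta F:=\Big(\tfrac{F(u_X^{n+1})-F(u_X^n)}{u_X^{n+1}-u_X^n}\Big)\Big|_Z.
\]
Inserting this and the relation for $\bbeta^n+\bbeta^{n+1}$ into the first quadratic term, its $B$-part produces $-\tfrac12(\balpha^{n+1}+\balpha^n)^T B(\balpha^{n+1}-\balpha^n)$, which cancels the $\balpha$-quadratic term exactly (both equal the telescoped difference by symmetry of $B$). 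What remains from the quadratic energies is the single cross term $-(\Delta F)^T W\,\Phim(Z,X)(\balpha^{n+1}-\balpha^n)$.

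Finally I would address the nonlinear part, which is the crux. The point is that the AVF scheme discretizes the nonlinearity by the discrete-gradient difference quotient $\Delta F$, so that nodewise $F(u_X^{n+1}(z_j))-F(u_X^n(z_j))=(\Delta F)_j\big(u_X^{n+1}(z_j)-u_X^n(z_j)\big)$ holds by definition of $\Delta F$. Summing against the weights $w_j$ and using $u_X^{n+1}|_Z-u_X^n|_Z=\Phim(Z,X)(\balpha^{n+1}-\balpha^n)$ from \eqref{eq:trial sol}, the nonlinear increment equals $(\Delta F)^T W\,\Phim(Z,X)(\balpha^{n+1}-\balpha^n)$, which is precisely the negative of the surviving cross term; hence $E^d_{n+1}-E^d_n=0$. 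I expect the main obstacle to be bookkeeping rather than conceptual, namely correctly transposing $A^{-1}B$, $W$, and $\Phim(Z,X)$ when passing $A(\bbeta^{n+1}-\bbeta^n)$ to a row vector, and recognizing that the discrete-gradient telescoping is the exact discrete analogue of $\IP{F'(u_X)}{\dt{u}_X}=\tfrac{d}{dt}\IP{F(u_X)}{1}$ used in the semi-discrete proof, so that it matches the cross term term-for-term.
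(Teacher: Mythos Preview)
Your proposal is correct and follows essentially the same approach as the paper's proof: both exploit the symmetry of $A$ and $B$, the relation $\balpha^{n+1}-\balpha^n=\tfrac{\tau}{2}(\bbeta^{n+1}+\bbeta^n)$ from the first AVF line, the second AVF line multiplied through by $A$, and the discrete-gradient telescoping $F(u_X^{n+1})|_Z-F(u_X^n)|_Z=(\Delta F)\odot\big(\Phim(Z,X)(\balpha^{n+1}-\balpha^n)\big)$. The only cosmetic difference is that the paper begins by left-multiplying the second equation of \eqref{eq:NLWfulldiscrete2} by $(\balpha^{n+1}-\balpha^n)^T A$ and then invokes the first equation, whereas you start from $E^d_{n+1}-E^d_n$ and substitute; the algebra and the cancellations are identical.
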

    \begin{proof}
        Multiplying $(\balpha^{n+1}-\balpha^n)^T {A}$ from the left to the second equation in \eqref{eq:NLWfulldiscrete2} yields
\begin{align*}
&\frac{1}{\tau}(\balpha^{n+1}-\balpha^n)^T  {A}(\bbeta^{n+1}-\bbeta^n)
\\
&=\frac{-1}{2}(\balpha^{n+1}-\balpha^n)^T {B}({\balpha^{n+1}+\balpha^n} ) -\big[\Phim(Z,X)(\balpha^{n+1}-\balpha^n)\big]^T W\left(\displaystyle\frac{F(u_X^{n+1})-F(u_X^n)}{u_X^{n+1}-u_X^n}\right)\Big|_{Z}.
\end{align*}
Since $\Phim(Z,X)(\balpha^{n+1}-\balpha^n)=(u_X^{n+1}-u_X^n)\big|_{Z}$, we have
\begin{eqnarray}
&&\frac{1}{\tau}(\balpha^{n+1}-\balpha^n)^T  {A}(\bbeta^{n+1}-\bbeta^n) \nonumber
\\
&&\qquad=\frac{-1}{2}(\balpha^{n+1}-\balpha^n)^T {B}({\balpha^{n+1}+\balpha^n} )
-\mathbf{1}^T W F({u_X^{n+1}})\big|_{Z}+\mathbf{1}^TWF({u_X^{n}})\big|_{Z}.
\label{Eq:InnerProduct}
\end{eqnarray}      
On the other hand, using the first equation in \eqref{eq:NLWfulldiscrete2}, we have
\begin{equation}\label{Eq:InnerProduct2}
  \frac{1}{\tau}(\balpha^{n+1}-\balpha^n)^T  {A}(\bbeta^{n+1}-\bbeta^n)
  =\frac{1}{2}(\bbeta^{n+1}+\bbeta^n)^T  {A}(\bbeta^{n+1}-\bbeta^n).
\end{equation}
By equalling the right-handed sides of \eref{Eq:InnerProduct} and \eref{Eq:InnerProduct2}, we can deduce
    \begin{align*}
        &\frac{1}{2}(\bbeta^{n+1})^T {A}\bbeta^{n+1}+\frac{1}{2} (\balpha^{n+1})^T {B}\balpha^{n+1}+\mathbf{1}^TW F({u_X^{n+1}})\big|_{Z}\\
        &\quad=\frac{1}{2}(\bbeta^{n})^T {A}\bbeta^{n}+\frac{1}{2} (\balpha^n)^T {B}\balpha^n+\mathbf{1}^TWF({u_X^{n}})\big|_{Z},
    \end{align*}
    or $E^d(t^{n+1},u_X) = E^d(t^{n},u_X)$,  which completes the proof.
    \end{proof}

\section{Numerical examples}\label{sec:numerical exmp}
Several numerical examples are presented to verify the accuracy and energy conservation of the proposed methods.

\subsection{Linear wave and spatial convergence}\label{exmp1}
We consider the linear wave equation ($F'(u)=0$) in $\Omega\times[0,T]=[-5,5]\times[0,1]$  subject to  initial conditions
\begin{equation}\label{eq:LW_initcond}
    u_0(x)=(1-x^2)_{+}^{\mu+1}\in C^{\mu}(\Omega),\quad u_1(x)=0,
\end{equation}
where $\mu\in\{1,2,4\}$, and $u_0(x)$ is a compactly supported function with finite smoothness. The exact solution is $u^*(x,t)=\frac{1}{2}\big(u_0(x-t)+u_0(x+t)\big)$ by the D'Alembert formula and $\text{supp}(u^*):=\Sigma\subset\Omega$ for all $t\in[0,1]$.

We deploy  four unscaled ($\epsilon=1$) Wendland's compactly supported functions $\phi_{s,k}$, whose supports are all of radius 1, with $(s,k)=(1,1)$, $(3,1)$, $(1,2)$, and $(3,2)$ in the ODE  \eref{eq:NLWfulldiscrete2} for the coefficients for the fully discretized solution. The first two functions \eref{phis1} result in a $\H^m$-reproducing kernels for  $m=2$ and the latter two \eref{phis2} yield that for $m=3$. Trial centers $X$ are uniformly spaced in $ [-4,4]$,
so that all trial functions satisfy zero Dirichlet boundary condition on $\partial\Omega$ automatically.
For this 1D problem, we take the quadrature points $Z$ and weight $W$ via an adaptive integration scheme\footnote{Adaptive integrations were done by the   MATLAB built-in function \texttt{integral}.}  with $1E-6$ and $1E-10$     relative and absolute error tolerance.
We remark that
the quadrature formula only affects the error between continuous energy and discrete energy, but doesn't affect the energy conservation.
To test the spatial discretization error, the AVF method is utilized with a small time step $\tau=1E-5$.

\fref{fig:LLconv}(a) and (b) show the $L^2$-error profile (that is estimated by $2048$ nodal values) against shrinking $h_X$ of sets of uniform RBF centers in $\Sigma$ for solving linear wave equation subject to the $C^2$-smooth initial conditions \eref{eq:LW_initcond} with $\mu=1,2$. By design, the (effective) smoothness order in Thm.~\ref{thm:NLWSemidiscrete} is limited by the initial conditions, i.e., $m\leq\mu$. Hence, we see that all four tested kernels yield similar performance in terms of accuracy. Overall, we also observe a superconvergence rate in our numerical results, i.e., the actual convergence rate is about the double of the predicted convergence rate. This phenomenon has been observed and studied by  Schaback in \cite{schaback2018superconvergence}.
One should see Thm.~\ref{thm:NLWSemidiscrete} as a theoretically guaranteed minimum convergence rate.

\begin{figure}[t]
    \centering
    \subfigure[$L^2$-error,
            $\mu=1$]{\includegraphics[width=.32\textwidth]{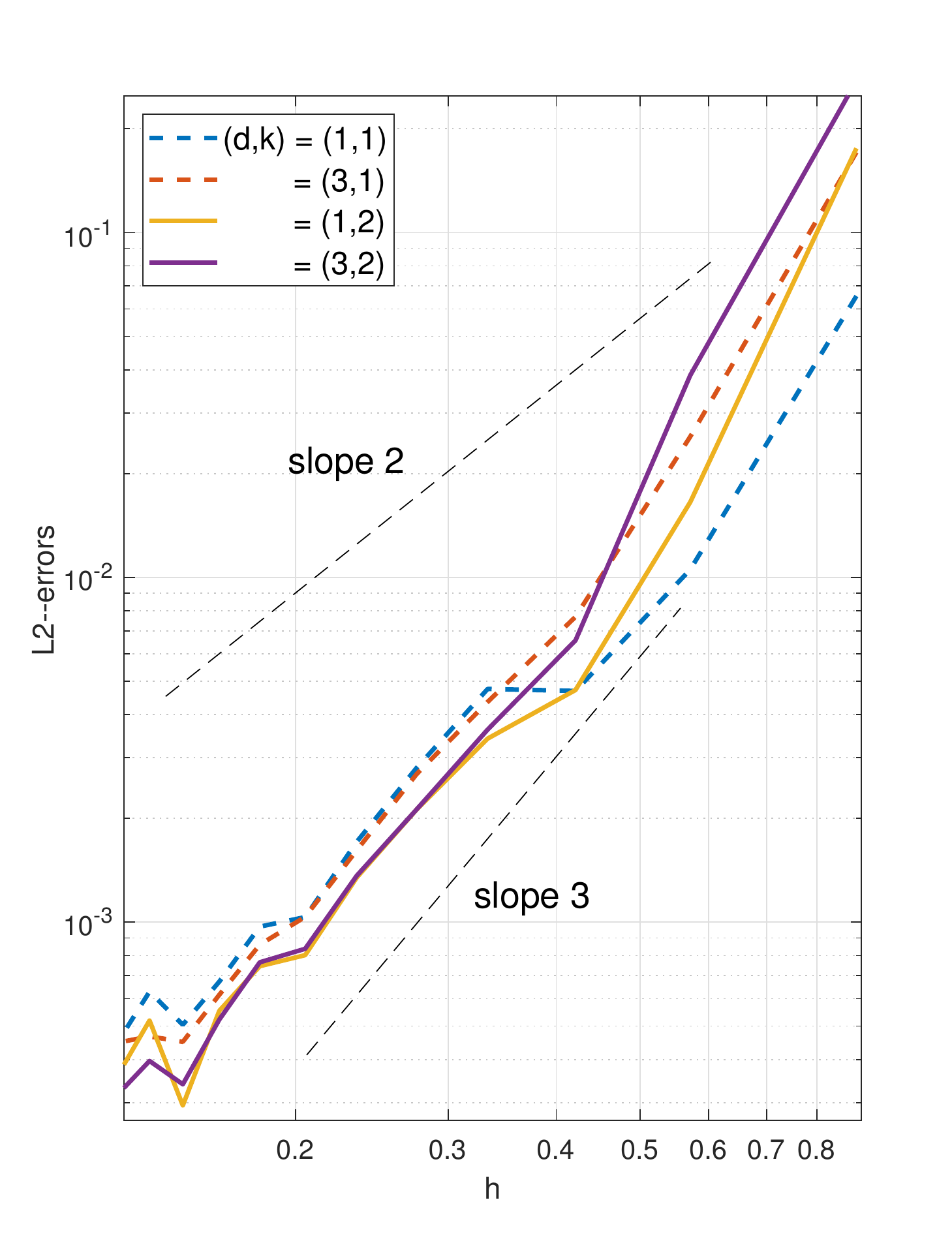}}
    \subfigure[$L^2$-error,
            $\mu=2$]{\includegraphics[width=.32\textwidth]{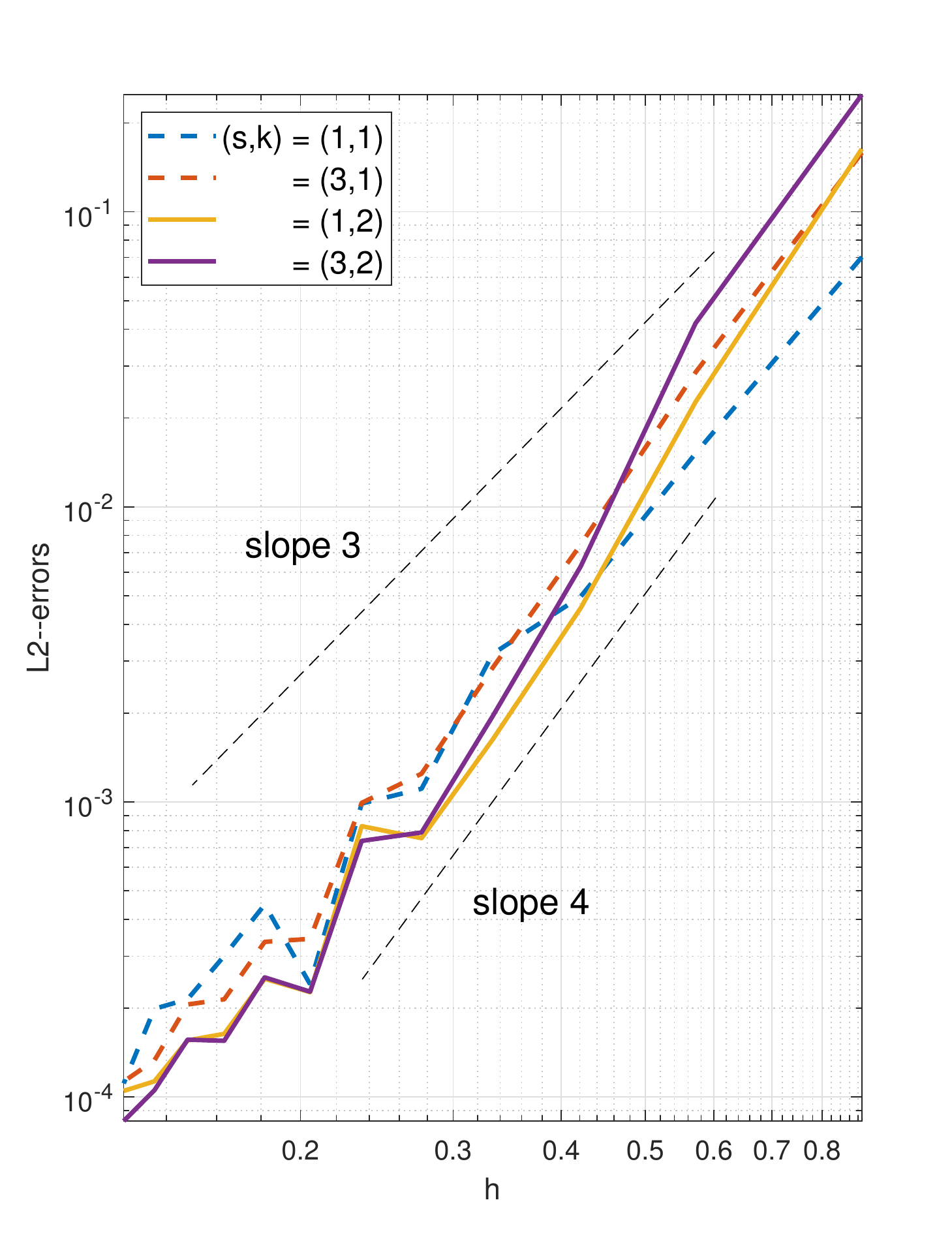}}
    \subfigure[$L^2$-error,
            $\mu=4$]{\includegraphics[width=.32\textwidth]{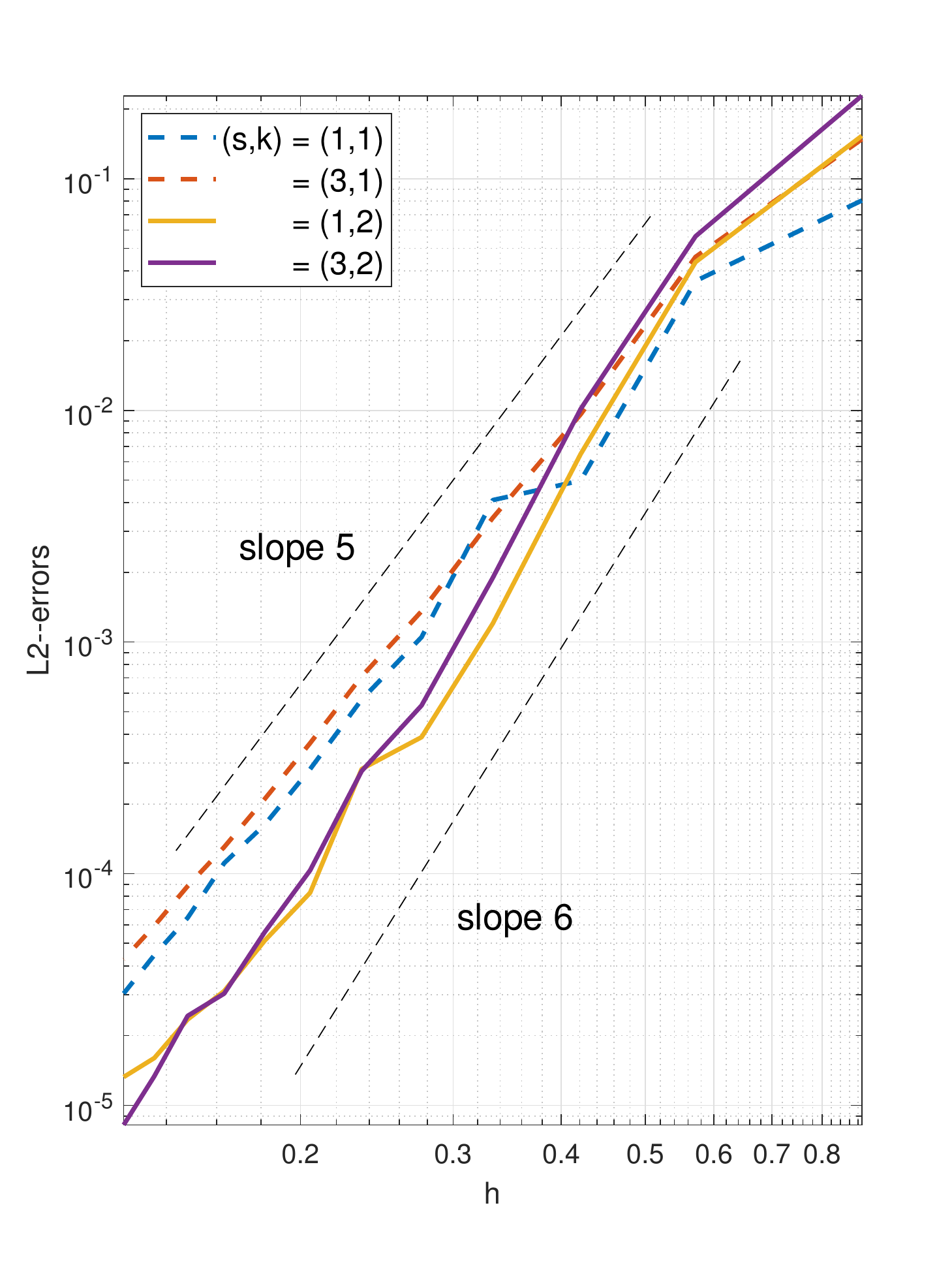}}\\
    \subfigure[$H^1$-error,
            $\mu=1$]{\includegraphics[width=.32\textwidth]{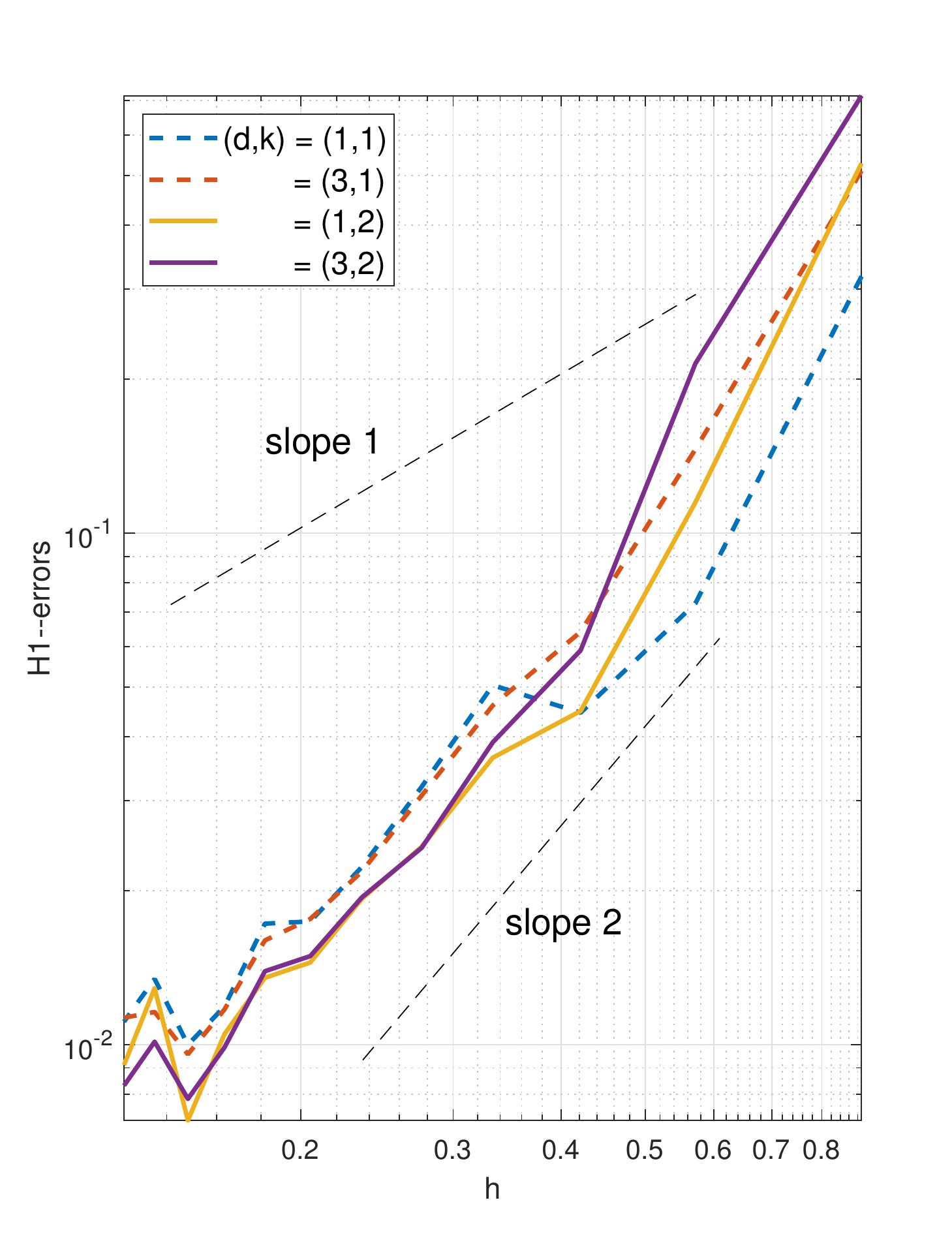}}
    \subfigure[$H^1$-error,
            $\mu=2$]{\includegraphics[width=.32\textwidth]{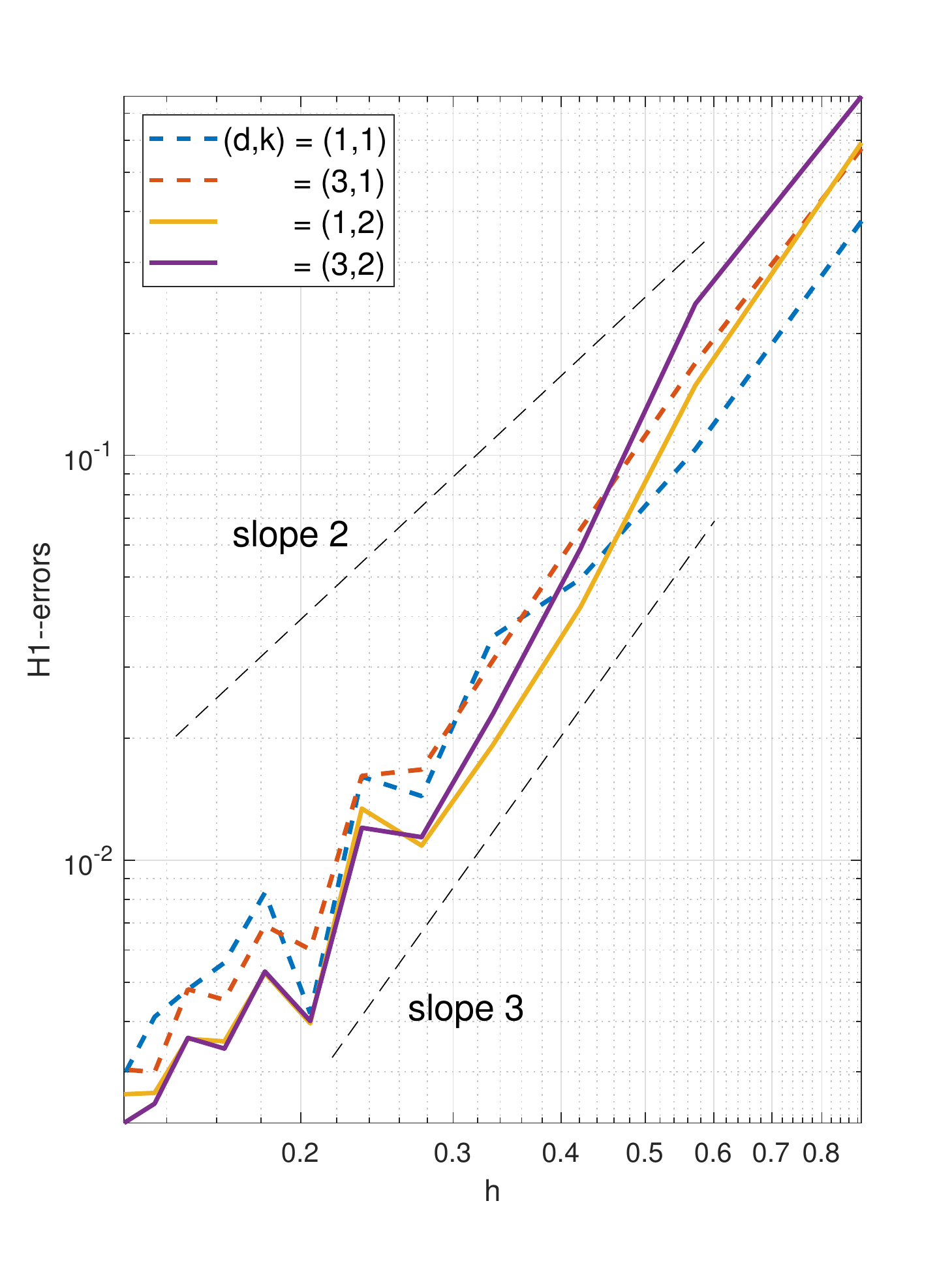}}
    \subfigure[$H^1$-error,
            $\mu=4$]{\includegraphics[width=.32\textwidth]{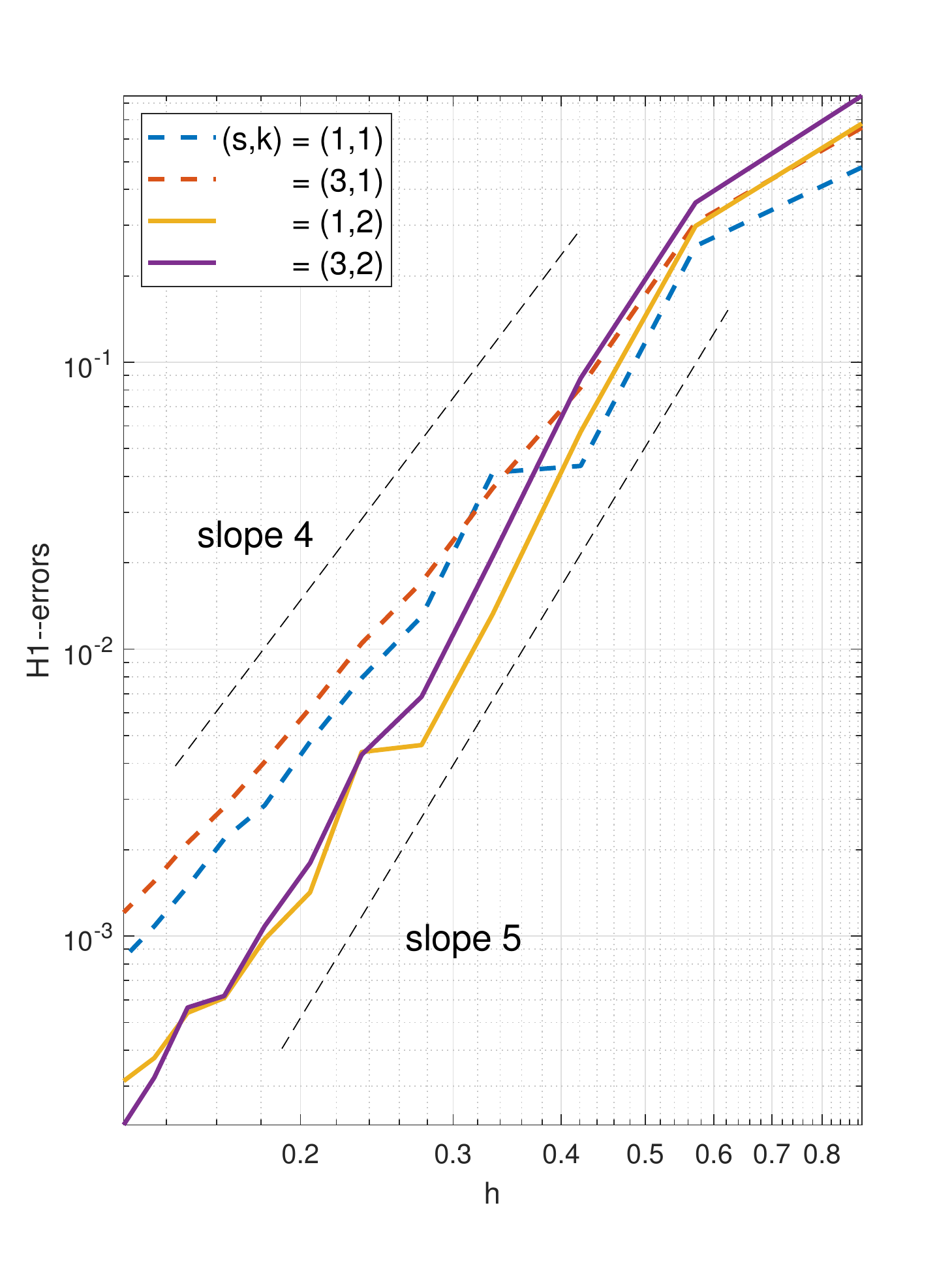}}
    \captionsetup{labelsep=period,labelfont=bf}
    \caption{\S\ref{exmp1}: Error profiles against fill distance $h_X$ for the fully discretized solution given by \eref{eq:NLWfulldiscrete2} subject to a $C^{\mu}(\Omega)$ initial conditions $u_0$ in \eref{eq:LW_initcond}.}
    \label{fig:LLconv}
\end{figure}

\begin{figure}[t]
    \centering
    \subfigure[$L^2$-error,
            $\mu=0$]{\includegraphics[width=.48\textwidth]{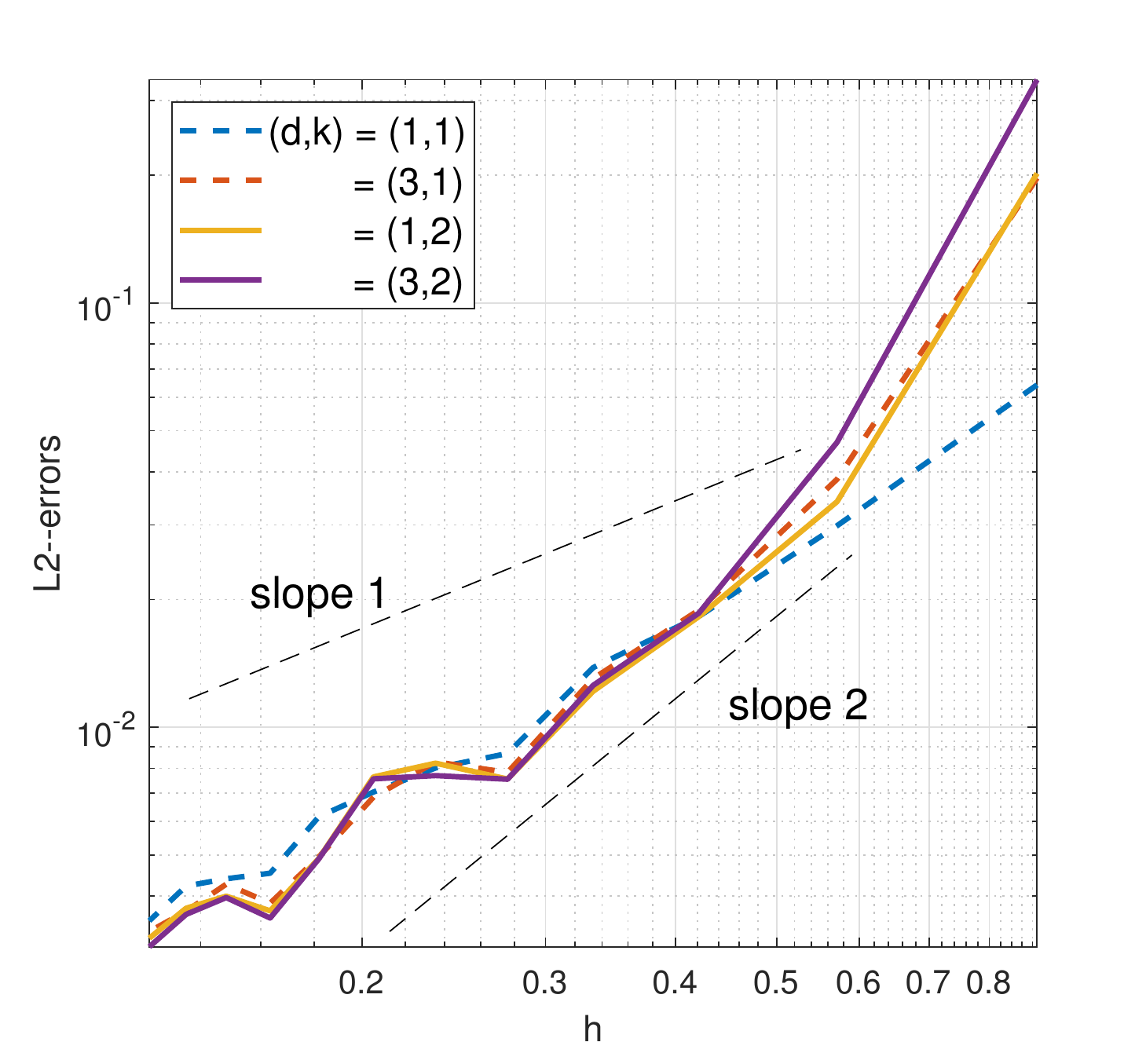}}
    \subfigure[Numerical solution]{\includegraphics[width=.48\textwidth]{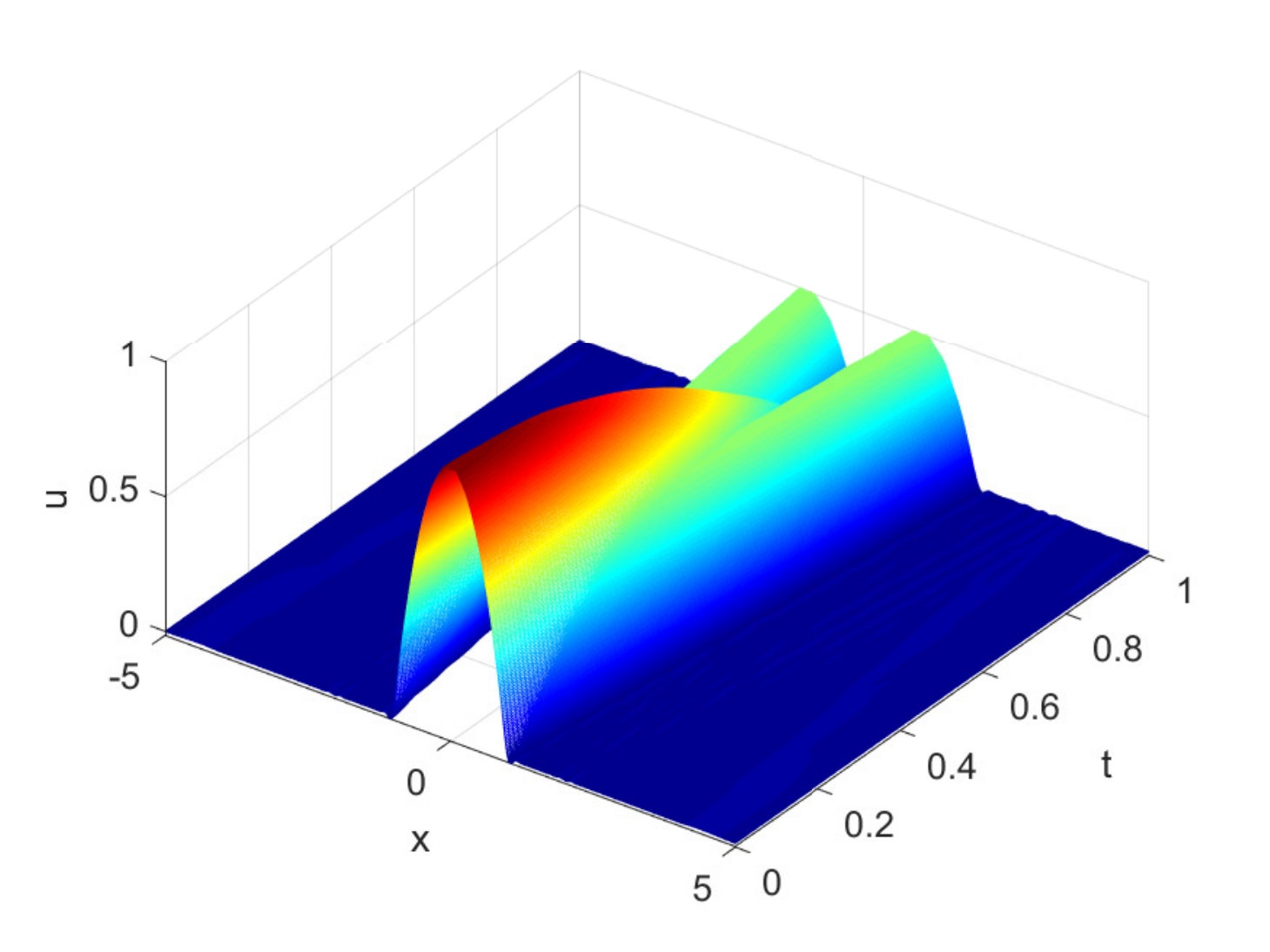}}\\
            \captionsetup{labelsep=period,labelfont=bf}
    \caption{\S\ref{exmp1}: (a) Error profiles against fill distance $h_X$ for the fully discretized solution given by \eref{eq:NLWfulldiscrete2} subject to $C^{0}(\Omega)$ initial conditions in \eref{eq:LW_initcond}; (b) Numerical solution at different time.}
    \label{fig:LLsolu}
\end{figure}

\fref{fig:LLconv}(c) shows the $L^2$-error for the $C^4$-smooth initial conditions  with $\mu=4$.
In this smoother test case, we clearly see that kernels with $k=2$ outperform those with $k=1$. In terms of accuracy, the value of $s$ is not important as expected based on what we know about restricted kernels. With two extra order of spatial smoothness in the initial condition (so is the exact solution), we see that $L^2$-convergence rate increase by two, comparing between  \fref{fig:LLconv}(b) and (c).
 The corresponding $\H^1$-error profiles are presented in \fref{fig:LLconv} (d) to (f). These  $\H^1$-error  profiles are   one order below that of their $L^2$ counterparts, which are consistent with our theoretical expectations.

To verify the robustness of the proposed method with respect to the smoothness of initial data, we also test the case of $\mu=0$.  We present the numerical results in  \fref{fig:LLsolu}; although Thm.~\ref{thm:NLWSemidiscrete} does not apply, the proposed method remains  $L^2$-convergent in this example with  a $C^0$ initial function.
Finally, we fix $n_X=100$ and kernel $\phi_{3,2}(r)$ to generate the temporal convergence results in \tref{Tab:TimeAccuracy}. This quick verification confirms that AVF method has the expected second order temporal convergence.

\begin{table}
    \centering
    \caption{\S\ref{exmp1}: Accuracy of the proposed method with kernel $\phi_{3,2}(r)$ verse time stepping size  for the linear wave equation
 with a $C^4(\Omega)$  initial condition.}
    \label{Tab:TimeAccuracy}
    \begin{tabular}{*{5}{c}}
        \toprule
        \qquad$\tau$ & \qquad $L^2$-error & \qquad rate &\qquad  $L^{\infty}$-error & \qquad rate \\
        \midrule
        \qquad 0.04  &\qquad 1.0689e-3& &\qquad 2.8914e-3 & \qquad \\
        \qquad 0.02& \qquad 2.6896e-4& \qquad 1.991 &\qquad 7.3049e-4 & \qquad 1.985 \\
        \qquad 0.01 &\qquad 6.7370e-5& \qquad 1.997  &\qquad 1.8701e-4 &\qquad 1.966  \\
        \qquad 0.005  & \qquad 1.7015e-5& \qquad 1.985 & \qquad 5.0886e-5 & \qquad 1.878 \\
        \bottomrule
    \end{tabular}
\end{table}

\begin{figure}
    \centering
    \includegraphics[width=.48\textwidth]{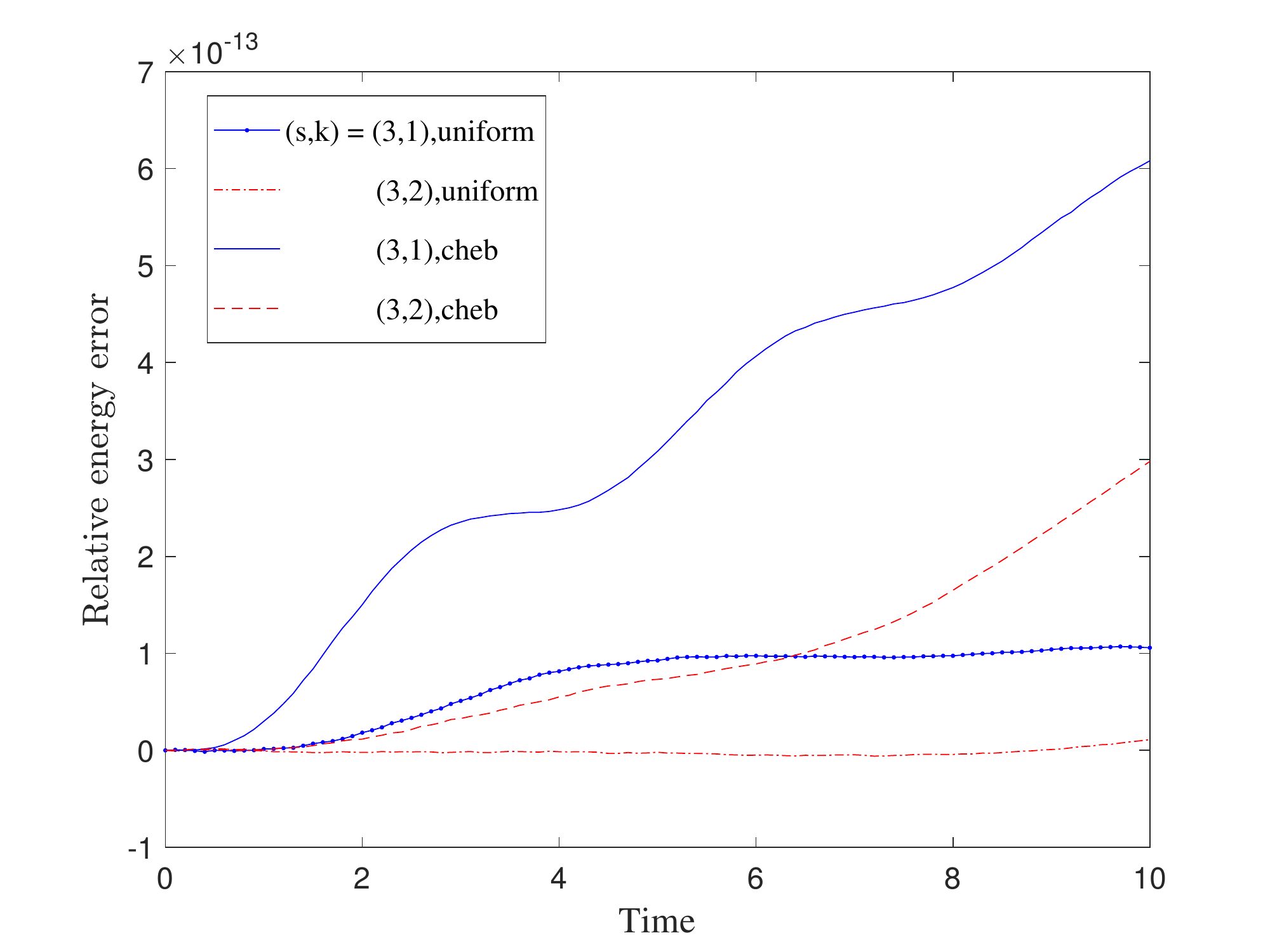}
    \captionsetup{labelsep=period,labelfont=bf}
    \caption{\S\ref{exmp1}: Discrete energy error of the proposed methods with uniform and Chebshev RBF centers for solving a linear wave equation over the time interval $[0,10]$.}
    \label{fig:LW_Energy}
\end{figure}

To verify the property of energy conservation, we solve the linear wave problem till a larger time $T=10$
(and  a larger $\Omega=[-12,12]$)
with
$n_X=100$ uniform and Chebshev RBF centers in $[-11,11]$.
Without explicitly using $W$ in the definition \eref{eq:NLWEnergy},
the \emph{relative discrete energy error} $E^d$
at time $0\leq t_n \leq T=10$ is calculated through the following formula
$$\emph{Discrete energy error}:=\frac{|E^d(t^n;u_X)-E^d(0;u_X)|}{|E^d(0;u_X)|}$$ also by adaptive integrations.
We take Chebshev centers to illustrate the effect of energy conservation on nonuniform centers.
\fref{fig:LW_Energy} presents the relative discrete energy errors at different times.
From  \fref{fig:LW_Energy},  we observe that  the  discrete energy error for Chebshev centers is slightly bigger than that of uniform centers, but they are both small enough and can be considered negligible. It shows that our meshless method can preserve the energy whether on uniform or nonuniform centers.

\subsection{Sine-Gordon equation and energy conservation}\label{exmp2}
To show the convergence results of our method for nonlinear equations, we consider the Sine-Gordon equation, namely $F'(u)=\sin(u)$, with the following exact solution
\begin{equation}\label{eq:NumerExamSGExact}
u^*(x,t)=4\tan^{-1}\left(\frac{1}{\zeta}\sinh\Big(\frac{\zeta t}{\sqrt{1-\zeta^2}}\Big)\Big/\cosh\Big(\frac{x}{\sqrt{1-\zeta^2}}\Big)\right).
\end{equation}
This is a kink-antikink system, an interaction between two solitons that move towards opposite directions with speed $\zeta\in(0,1)$. The solution becomes steeper as $\zeta\rightarrow 1$. In our experiment, we choose the speed $\zeta=0.9$.  Initial conditions were computed using \eref{eq:NumerExamSGExact}.
We follow the numerical set up as in \sref{exmp1}, but only focus on kernel $\phi_{s,k}$ with $s=3$ and $k\in\{1,2\}$. On top of that, we use a fixed-point iteration method to solve the nonlinear ODE \eref{eq:NLWfulldiscrete2}.
\tref{Tab:SG} verifies that our observations in linear wave  (see \fref{fig:LLconv}(c) and (f) for results with sufficiently smooth solution) also hold in Sine-Gordon equation up to $T=1$ on domain $[-20,20]$.

%
%
\begin{table}
    \centering
    \caption{\S\ref{exmp2}: Numerical error of the proposal method for solving a Sine-Gordon equation.}
    \label{Tab:SG}
    \begin{tabular}{*{9}{c}}
        \toprule
        \multirow{2}*{N} & \multicolumn{4}{c}{$\phi_{3,1}(r)$ 
} & \multicolumn{4}{c}{$\phi_{3,2}(r)$                        
}\\
        \cmidrule(lr){2-5}\cmidrule(lr){6-9}
        & $L^2$-error & rate & $H^1$-error & rate & $L^2$-error & rate & $H^1$-error & rate\\
        \midrule
        50 &2.0018e-2& &8.7515e-2 & &2.8107e-2&      &1.2419e-1& \\
        100&1.1385e-3&4.14 &9.1802e-3 &3.25 &4.4244e-4&5.99 &3.7351e-3&5.06\\
        150&2.2130e-4&4.04 &2.5820e-3 &3.13 &3.9116e-5&5.98 &4.7679e-4&5.08 \\
        200&7.2005e-4&3.90 &1.0957e-3 &2.98 &6.9956e-6&5.98 &1.1117e-4&5.06\\
        \bottomrule
    \end{tabular}
\end{table}

We now compare the energy-conservation
performance of the proposed meshless Galerkin method together with the AVF time integrator (MGAVF) against three other  methods: the RBF interpolation method for spatial discretization together with the implicit midpoint method for temporal discretization (RBFIM), the traditional meshless Galerkin method with the AVF time integrator (TGAVF), and the method proposed in \cite{kunemund2019high}, which uses the traditional RBF Galerkin method with a modified Crank-Nicolson scheme (TGCN). Here, the traditional meshless Galerkin method directly employs the Galerkin equation \eqref{eq:NLWGalerkin} without the projection step \eqref{eq:NLWGalerkinDiscrete2}.
We use kernel $\phi_{3,2}(r)$ with time step $\tau =0.01$, and $n_X=200$ RBF centers in all methods.
\fref{fig:SGLongTime}(a) shows that all but RBFIM are having similar performance in terms of $L^2$-error.  In  \fref{fig:SGLongTime}(b), it is evidential that the proposed method (MGAVF) is the only one that can  preserve the discrete energy up to high precision.


\begin{figure}
    \centering
    \subfigure[Absolute $L^2$ error]{\includegraphics[width=.49\textwidth]{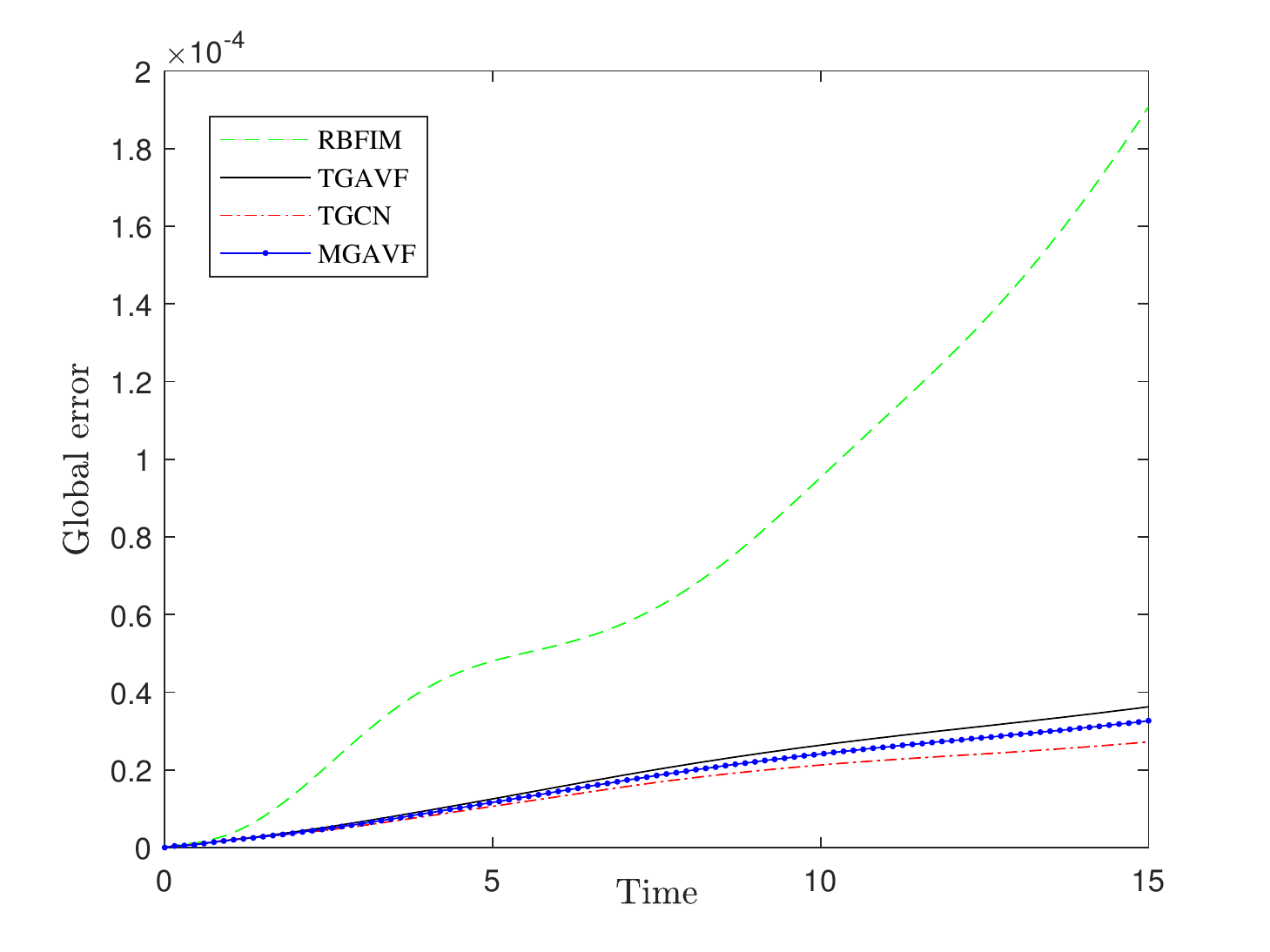}}
    \subfigure[Relative energy error]{\includegraphics[width=.49\textwidth]{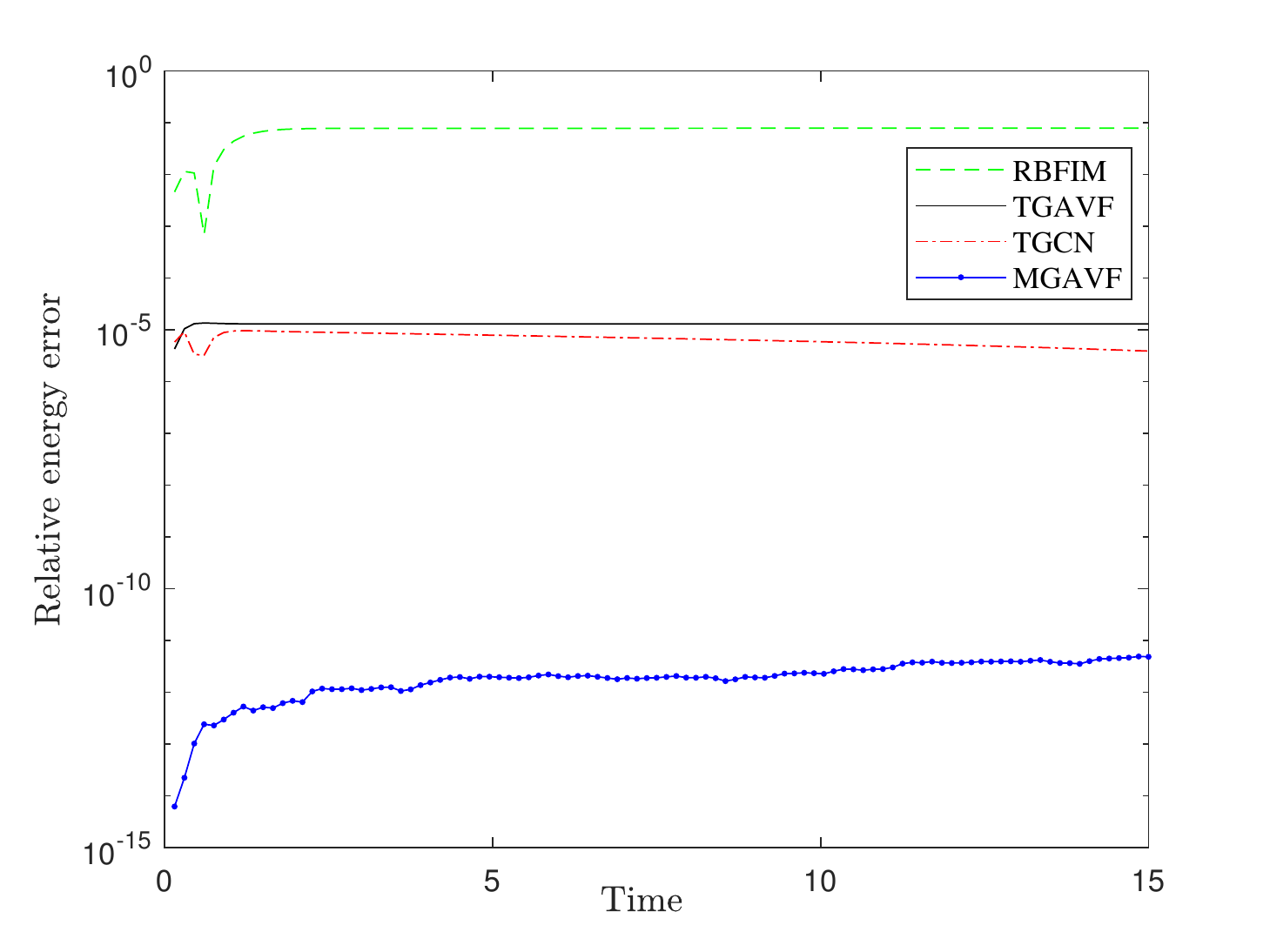}}
    \captionsetup{labelsep=period,labelfont=bf}
    \caption{\S\ref{exmp2}: Error and Energy conservation of the proposed method (MGAVF) against  RBFIM, MGIM, and MGCN.}
    \label{fig:SGLongTime}
\end{figure}

\subsection{Simulating 2D Klein-Gordon equation}\label{exmp3}
The important feature of meshless methods is their flexibility to process multi-dimensional problems. Thus, we desire to solve another two-dimensional nonlinear wave equation to conclude this paper.

We focus on the nonlinear function $F'(u)=u^3$ in \eqref{eq:NLW} and consider
\begin{equation}\label{eq:2DKleinGordon}
 \dtt{u}-u_{xx}-u_{yy}+u^3=0,
\end{equation}
which is the well-known Klein-Gordon equation, subject to initial conditions
$$u(x,0)=2\text{sech}(\cosh(x^2+y^2)),\quad \dt{u}(x,0)=0.$$
We solve this PDE on both $n_X=41^2$ uniformly spaced and scattered Halton RBF centers  \cite[Appx. A]{fasshauer2007meshfree} in $\Sigma=[-10,10]^2$
with  $\phi_{s,k}$ ($s=3,k=2$).  We simply take $n_Z=128^2$ regular grids in $\Omega=[-11,11]^2$  as the set of quadrature points $Z$.


\fref{fig:NLW2d_Solution} shows a few snapshots of the numerical solution at different times. We see  the correct physics that a circular soliton expands and propagates to the whole domain. These results are consistent with those in the literature \cite{darani2017direct,cai2020two}.
More importantly,   \fref{fig:NLW2D_Energy} shows the relative energy of our numerical solution over time. Energy preservation on both uniform and scattered RBF centers are of the order of $1E-11$.

\begin{figure}[tp]
    \centering
    \subfigure[$T=0$]{\includegraphics[width=.49\textwidth]{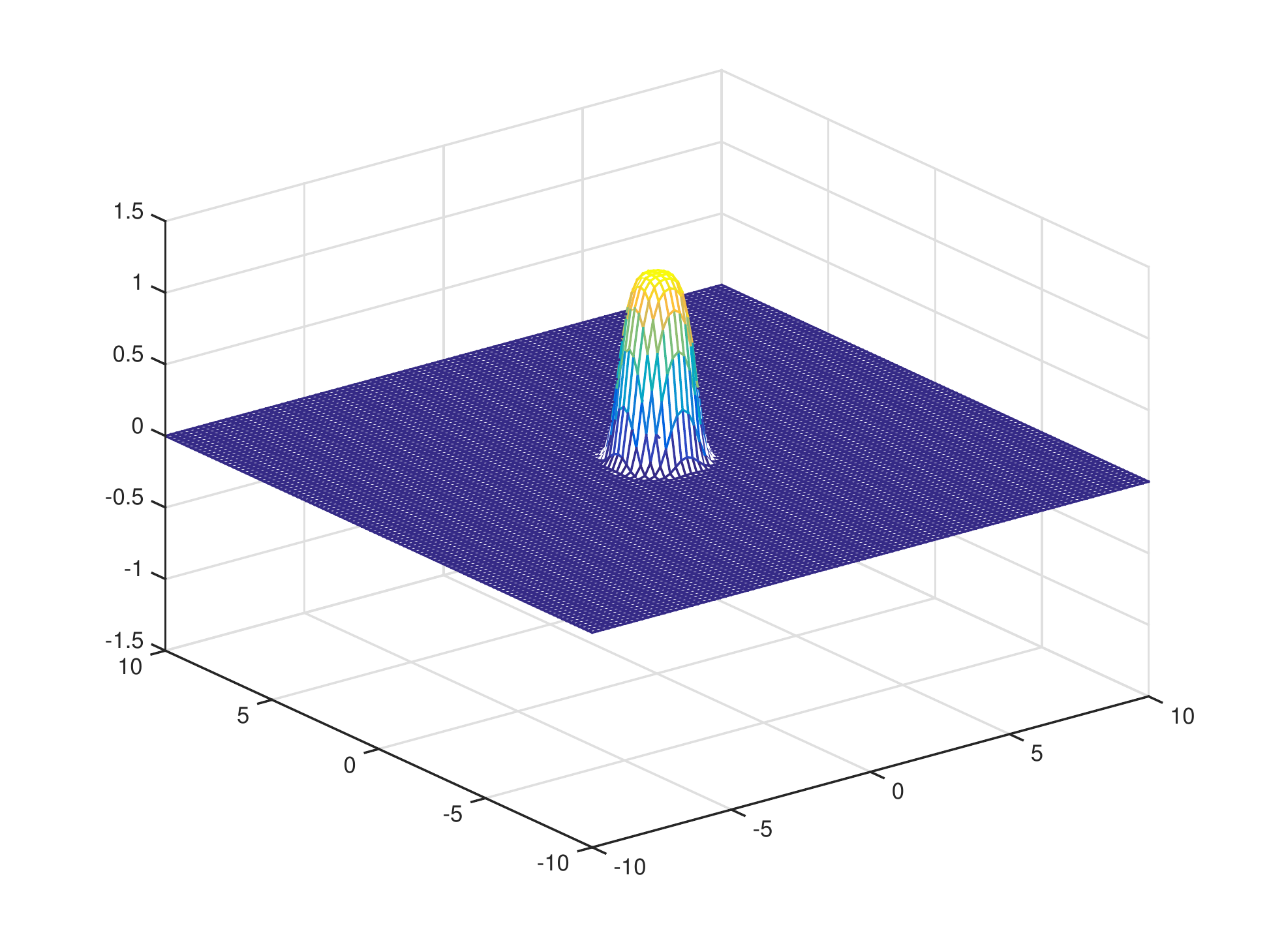}}
    \subfigure[$T=1$]{\includegraphics[width=.49\textwidth]{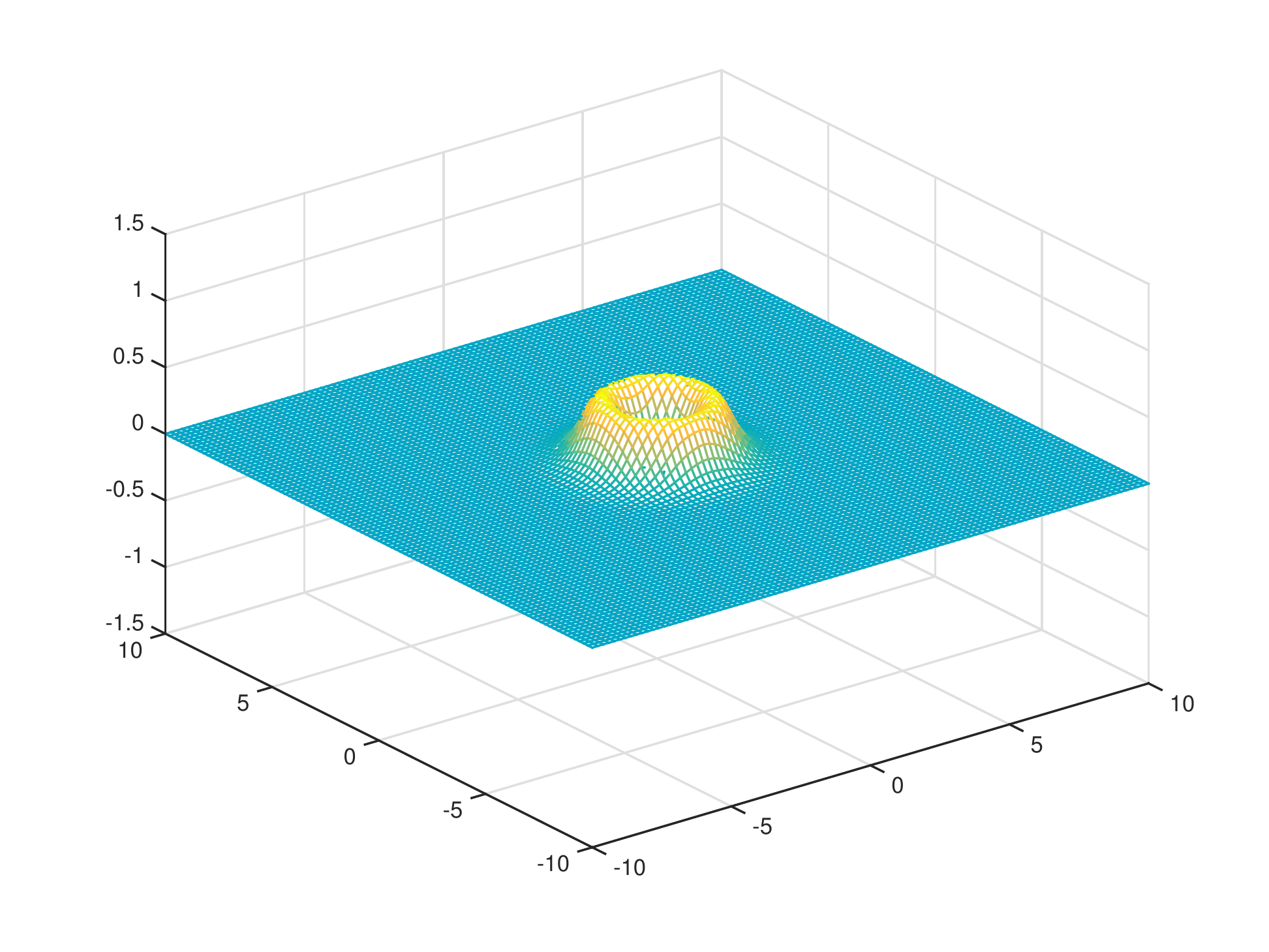}}
    \subfigure[$T=3$]{\includegraphics[width=.49\textwidth]{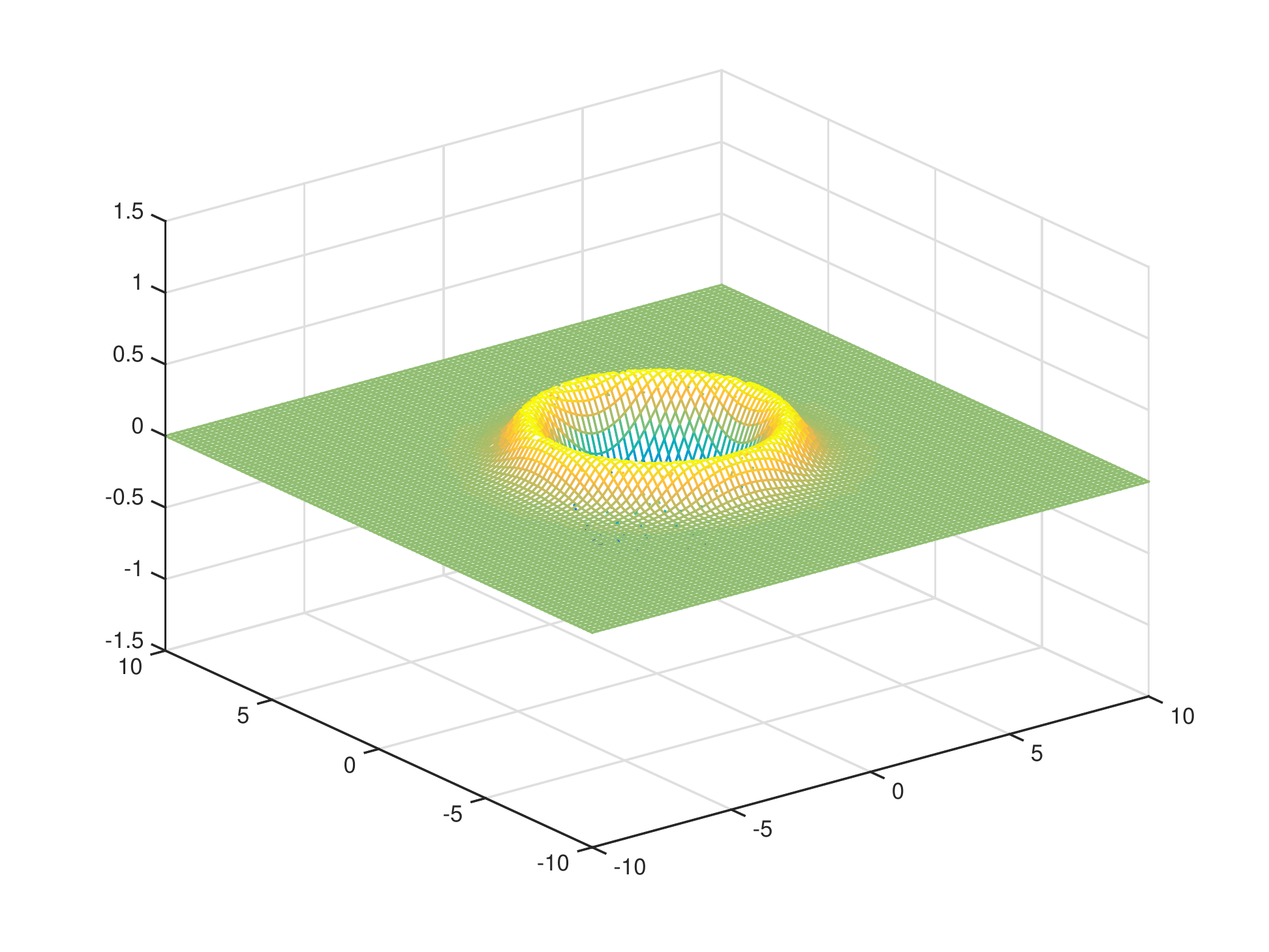}}
    \subfigure[$T=7$]{\includegraphics[width=.49\textwidth]{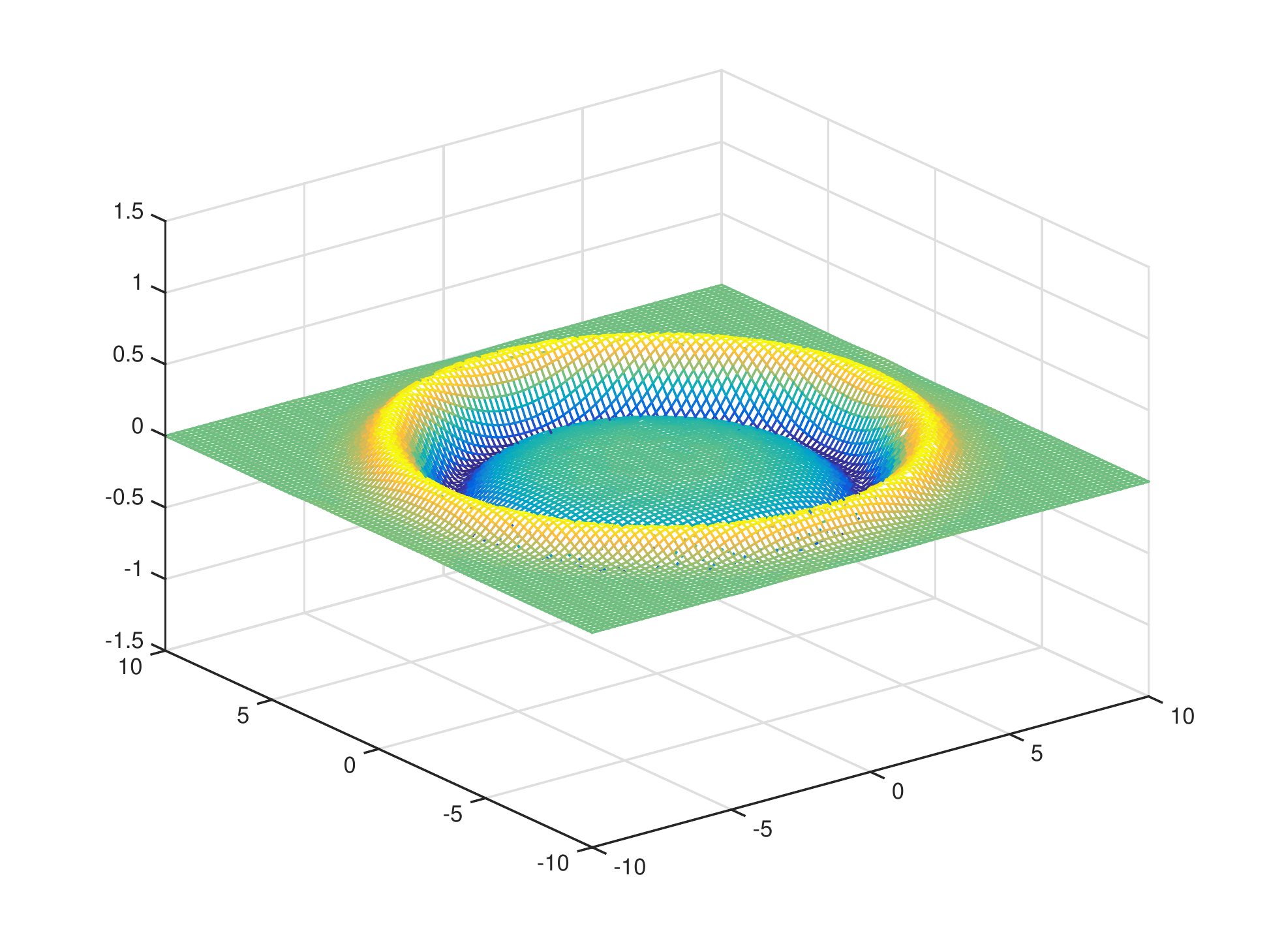}}
    \captionsetup{labelsep=period,labelfont=bf}
    \caption{\S\ref{exmp3}: 2D Klein-Gordon equation: Snapshots of the solution at different times with $n_X=41^2$ Halton nodes and $\tau=0.001$. Numerical solution interpolated on a equidistant grid of 101 nodes in each space direction.}
    \label{fig:NLW2d_Solution}
\end{figure}

\begin{figure}[htp]
    \centering
    \includegraphics[width=.48\textwidth]{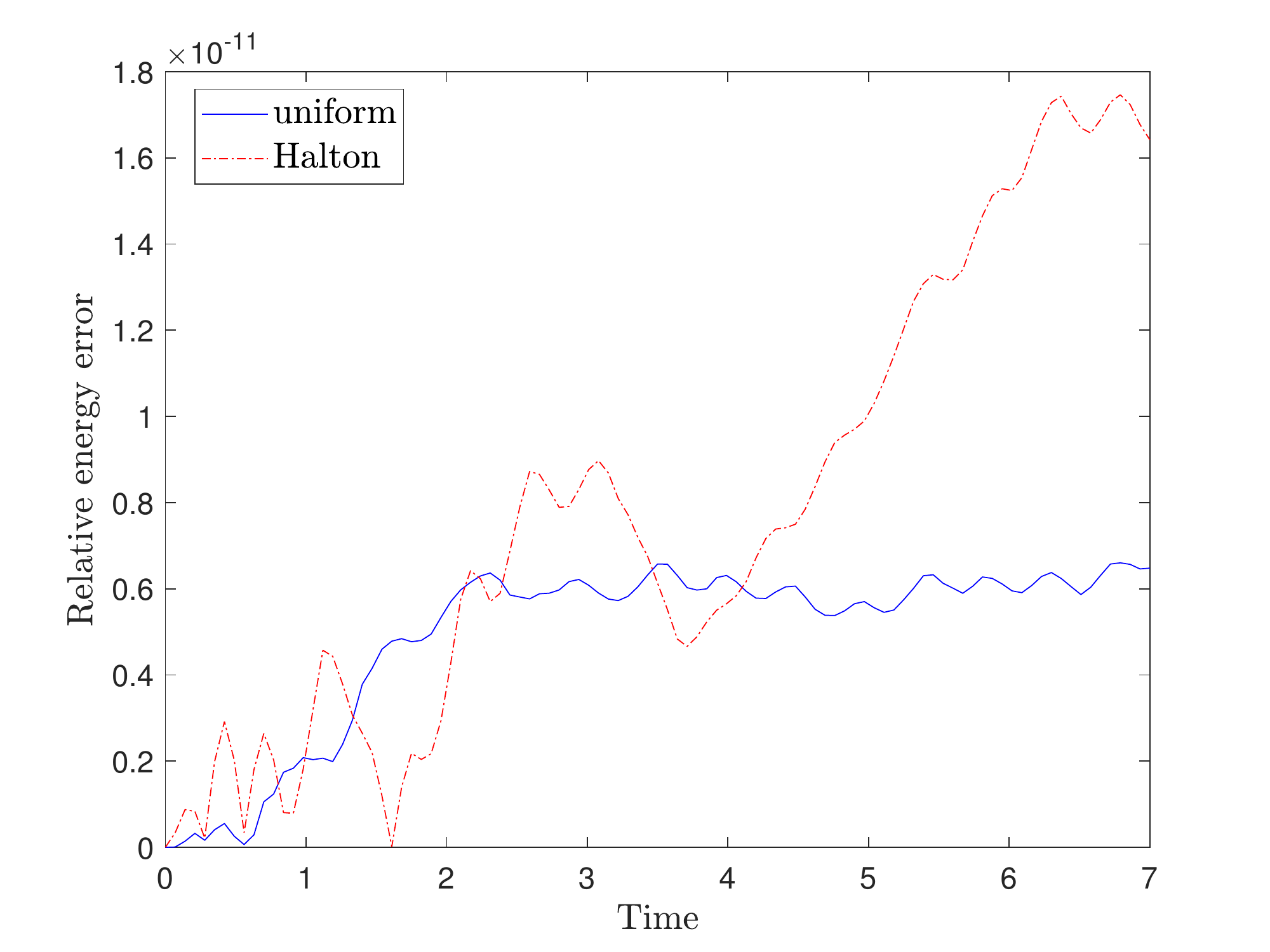}
    \captionsetup{labelsep=period,labelfont=bf}
    \caption{\S\ref{exmp3}:
Numerical errors in the discrete energy over the time interval $[0,7]$ when the proposed method is employed to simulate a 2D Klein-Gordon equation.}
    \label{fig:NLW2D_Energy}
\end{figure}

\section{Conclusion}
We propose a meshless conservative Galerkin method to solve semi-linear wave equations, where the RBF Galerkin method in space and AVF method in time discretization is implemented.
Our formulation only differs slightly to the traditional meshless Galerkin method, but conserves energy by introducing some appropriate projections.
We further show that a discrete energy,  whose accuracy can independently be controlled by selecting quadrature points and weights, is conserved exactly by the semi-discretized solution.
Convergence results for solving Hamiltonian wave equations are also investigated.
Various numerical simulations for solving different types of PDEs are presented to verify the accuracy and energy conservation properties.
Compared with classical conservative schemes, our method can conserve the energy on scattered nodes in multi-dimensional space.
This study is only focused on time-independent scattered node distributions.
However, it is well-known that RBF methods are very competitive in an adaptive setting. Thus, adaptive energy conserving methods based on RBF approximation will be carried out in the future work.


\bibliographystyle{plain}
\bibliography{meshlessConservative}

\end{document}